\documentclass{article}
\usepackage{hyperref}
\usepackage{xcolor}
\usepackage{amsmath}
\usepackage{amsthm}
\usepackage{amssymb}
\usepackage{enumitem}
\usepackage{indentfirst}
\numberwithin{equation}{section}
\newtheorem{theorem}{Theorem}[section]
\newtheorem{definition}[theorem]{Definition}
\newtheorem{corollary}[theorem]{Corollary}

\newtheorem{lemma}[theorem]{Lemma}
\newtheorem{proposition}[theorem]{Proposition}

\newtheorem{remark}{Remark}[section]
\newtheorem{example}[theorem]{Example}

\newcommand{\set}[1]{\left\{#1\right\}}

\begin{document}

	\title{\bf Anisotropic quasilinear
		elliptic systems involving nonlinearities with negative critical terms}

	\author{Elisandra Gloss$^{1}$\,, Artur Jorge Marinho$^2$ and Kanishka Perera$^3$\\
		{\small $^1$Departamento de Matem\'{a}tica, Universidade Federal da Para\'{i}ba}\\
		{\small 58051-900 Jo\~ao Pessoa-PB, Brazil}\\
		{\small{$^1$Corresponding author: elisandra.gloss@academico.ufpb.br}}\\
		{\small $^{2,3}$Department of Mathematics and Systems Engineering, Florida Institute of Technology}\\
		{\small Melbourne, FL 32901, USA}\\
		{\small $^{2}$amarinho2024@my.fit.edu, $^{3}$kperera@fit.edu}
	}
	\date{}
	
	\maketitle
	
	\begin{abstract}
		In this paper we investigate existence and multiplicity of solutions for a critical growth anisotropic quasilinear
		elliptic system, depending on two real parameters $\lambda,\,\mu$, that is coupled through a subcritical perturbation term and involves a negative critical part. 
		We identify a specific scaling for the system and introduce a parameter $\gamma$ associated with this scaling, which governs the geometry of the corresponding variational functional. We establish three distinct types of multiplicity results, depending on the regimes $\gamma = 1$, $\gamma > 1$, and $\gamma < 1$. The multiple results of multiplicity of solutions are based on a recent abstract critical point theorem
		for symmetric functionals on product spaces when $\gamma\geq1$ and on a truncation argument for $\gamma<1$.
		\footnote{\textbf{Keywords:} anisotropic quasilinear elliptic system; critical growth; nonlinear eigenvalue; variational methods.}
		\footnote{\textbf{MSC 2020:}  35J92, 35J20,  35B33, 58E05. }
		
	\end{abstract}
	
	\section{Introduction}
	In this paper we are interested in existence and multiplicity of nontrivial solutions for the class of critical growth anisotropic quasilinear elliptic systems
	\begin{equation}\label{Sys}
		\left\{
		\begin{aligned}
			-\Delta_p u &= \lambda H_u(u,v) - \mu|u|^{p^\ast-2}u,& \text{in } \Omega\\
			-\Delta_q v &= \lambda H_v(u,v) + |v|^{q^\ast-2}v,&\text{in } \Omega\\
			u=v&=0,&\text{on } \partial\Omega
		\end{aligned}
		\right.
		\tag{$P_{\lambda,\mu}$}
	\end{equation}
	where $\Omega\subset\mathbb{R}^N$ is a bounded domain, $\Delta_pu= \operatorname{div}(|\nabla u|^{p-2}\nabla u)$ is the $p-$Laplacian of $u$, $1<p\leq q<N$, $p^\ast=Np/(N-p)$ and $q^\ast=Nq/(N-q)$ are the critical Sobolev exponents, $\lambda, \mu > 0$ are real parameters, and $H\in C^1(\mathbb{R}^2,\mathbb{R})$ is an even function, with subcritical growth, satisfying 
	\[
	H(t^{1/p}u,t^{1/q}v)=t^\gamma H(u,v), \,\,\forall t\geq0,\quad \forall (u,v)\in\mathbb{R}^2,
	\]
	among some additional conditions, depending on whether the parameter $\gamma$ is less than or greater than $1$.

	Equations involving the $p$-Laplacian operator can be used to model a variety of problems, such as chemotaxi \cite{Bendahmane}, predator-prey models \cite{Songzhi}, non-Newtonian fluid dynamics \cite{GilberlandioJ}, \cite{JianWang}. System \eqref{Sys}  can be used to model a stationary state of two non-Newtonian fluids with different viscosities (if $p\neq q$) flowing through a porous medium, where the operators $-\Delta_p u$
	and $-\Delta_q v$ govern the momentum diffusion.  The coupling term $\lambda H_u(u,v)$
	(resp. $\lambda H_v(u,v)$) models the nonlinear interaction between the two fluid phases, while the term $\mu|u|^{p^*-2}u$ represents a critical-growth absorption by the medium and $|v|^{q^*-2}v$ acts as a source for the second component.

	Although there is a large literature associated with critical systems involving the $p-$Laplacian operator,  we are not aware of any results on existence or multiplicity of solutions for problem \eqref{Sys} with a negative critical term if $p\neq q$.
	When $p=q=2$ we refer to \cite{Chen-Zou-CV-2015,Gloss-Medeiros-Severo,Moreno-2025,Silva-Sousa-20},
	where the authors considered systems involving a negative term, in a subcritical or critical setting. Solutions were obtained using the Mountain Pass theorem or minimization on the Nehari manifold, and these arguments require  $\lambda\in(0,\lambda^*)$ for some $\lambda^*>0$. 
	For $p=q$, we can cite \cite{Chu-Tang-2013,Chu-Lei-Suo-2017,Liu-Zhao-Liu-2018}. In \cite{Chu-Tang-2013}  a critical system is considered involving concave-convex nonlinearities with some weight functions that may change sign, but have a nontrivial positive part. In \cite{Chu-Lei-Suo-2017} the authors  studied  a critical problem with lower-order negative perturbation. In \cite{Liu-Zhao-Liu-2018} the authors considered a system with $k$ equations coupled via a critical term with mixed sign. Using a truncation argument, they proved existence of infinitely many sign-changing solutions when $N>p+p^2$.
	If $k=2$, their nonlinearity is 
	$$
	F(u,v)=\frac{1}{p}(\alpha_1|u|^p+\alpha_2|v|^p)+\frac{1}{p^*}(\beta_{1,1}|u|^{p^*}+2\beta_{1,2}|u|^\frac{p^*}{2}|v|^\frac{p^*}{2}+\beta_{2,2}|v|^{p^*}),
	$$ 
	with $\beta_{1,2}=\beta_{2,1}\leq0$, $\beta_{i,i}>0$ and $0<\alpha_i<\lambda_1(\Omega)$ for $i=1,2$, where $\lambda_1(\Omega)$ is the first eigenvalue of $-\Delta_p$. Even in the case $p=q$, our system \eqref{Sys} exhibits a different structure from that in \cite{Liu-Zhao-Liu-2018}, since it involves a general subcritical term $H$, 
	which allows us to treat the problem for parameters beyond the first eigenvalue,
	and addresses the case $\beta_{1,1}=-\mu<0$.
	
	The present work is motivated by the recent results in \cite{Marinho-Perera-2025}, where critical anisotropic systems with $p\neq q$ were studied. Considering $\mu=-1$ and $H(u,v)=|u|^a|v|^b$ for $a,b>1$ such that $\frac{a}{p_0}+\frac{b}{q_0}=1$ for some $p_0\in(1,p^\ast)$ and  $q_0\in(1,q^\ast)$ and analyzing the cases $\gamma>1$, $\gamma=1$ and $\gamma<1$ for $\gamma=\frac{a}{p}+\frac{b}{q}$, existence and multiplicity results were proved.
	A closely related problem was also studied by adding a negative subcritical (``sublinear'') perturbation to the system. Here we complement the results obtained in \cite{Marinho-Perera-2025} since we consider a more general class of functions $H$ and deal with a negative critical term, which leads to additional difficulties in establishing some compactness conditions for the functional associated with system \eqref{Sys}.
	
	\medskip
	
	Throughout this article we will assume that $H$ satisfies  the following assumptions:
	\begin{enumerate}[label={($h_\arabic*$)},  
		ref=($h_\arabic*$)]               
		\item\label{H basic} 
		$H\in C^1(\mathbb{R}^2,\mathbb{R})$ is an even  function satisfying
		\[
		H(u,v)\ge0,\,\forall (u,v)\in\mathbb{R}^2\quad\text{and}\quad H(u,v)>0\quad\text{for}\quad uv\ne0.
		\]
		
		\item\label{H subcritic} 
		There are $C>0$, $p_1\in(1,p^\ast)$, $q_1\in(1,1+q^\ast(p^\ast-1)/p^\ast)$, $p_2\in(1,1+p^\ast(q^\ast-1)/q^\ast)$ and $q_2\in(1,q^\ast)$ such that
		\[
		|H_u(u,v)|\leq C\left(|u|^{p_1-1}+|v|^{q_1-1}+1\right)\quad\text{and} \quad |H_v(u,v)|\leq C\left(|u|^{p_2-1}+|v|^{q_2-1}+1\right),\quad\forall (u,v)\in\mathbb{R}^2.
		\]
		\item\label{H ep Cep}
		There exist $\tilde{p},\bar{p}\in(1,p^\ast]$ and $\tilde{q},\bar{q}\in(1,q^\ast]$ such that, for any $\varepsilon>0$ there is $C_\varepsilon>0$ satisfying
		\[
		|H(u,v)|\leq \varepsilon (|u|^{\tilde p}+|u|^{\bar p})+ C_\varepsilon(|v|^{\tilde q}+|v|^{\bar q}),\quad\forall (u,v)\in\mathbb{R}^2.
		\]
		\item\label{H scaling}
		There exists $\gamma>0$ for which one has
		\[
		H(t^{1/p}u,t^{1/q}v)=t^\gamma H(u,v), \,\,\forall t\geq0,\quad \forall (u,v)\in\mathbb{R}^2.
		\]
	\end{enumerate}
	When assuming $\gamma\geq 1$ in \ref{H scaling}, we also consider the hypotheses:
	\begin{enumerate}[label={($h_\arabic*$)},  
		ref=($h_\arabic*$),start=5]               
		\item\label{H_v ep Cep} There exists $r\in(q,q^\ast)$ such that, for any $\varepsilon>0$ there is $C_\varepsilon>0$ satisfying
		\[
		rH(u,v)-H_v(u,v)v\leq \varepsilon \left(|u|^p+ |v|^{q^\ast}\right)+C_\varepsilon,\quad\forall (u,v)\in\mathbb{R}^2.
		\]
		\item\label{AR}
		$H$ satisfies
		\[
		H(u,v)\leq\frac{1}{p}H_u(u,v)u+\frac{1}{q}H_v(u,v)v , \quad \forall (u,v)\in\mathbb{R}^2.
		\]
	\end{enumerate}
	
	\begin{remark}
		\begin{description}
			\item[i)] 
			There are many functions $H$ satisfying assumptions \ref{H basic}-\ref{H scaling}. A simple example is
			\begin{equation*}
				H(u,v)=\sum_{i=1}^kc_i |u|^{a_i}|v|^{b_i}+d|v|^{q\gamma},
			\end{equation*}
			where $a_i,b_i>1$ are such that $\frac{a_i}{p_i}+\frac{b_i}{q_i}=1$ for some $p_i\in(1,p^\ast)$ and $q_i\in(1,q^\ast)$, $\gamma=\frac{a_i}{p}+\frac{b_i}{q}\,$ and   $c_i,d\geq0$ are such that $(c_1,\cdots,c_k,d)\ne0$. If  $p_i\leq p$ for all $i$ and $\gamma\geq1$, then $H$ also satisfies \ref{H_v ep Cep}-\ref{AR}. When $p=q$, if \ref{H scaling} holds, then $H$ is a $p\gamma$-homogeneous function. We can cite \cite{Barbosa-Montenegro-2011,DeMoraisFilho-Souto-99} for more examples of homogeneous functions.
			\item[ii)] Condition \ref{H subcritic} guarantees that $H$ is a subcritical term, whereas assumptions \ref{H ep Cep} and \ref{H_v ep Cep} are technical conditions used to handle the negative critical term. Observe that the constants in \ref{H ep Cep} may coincide. This assumption  does not allow functions of the form $H(u,v)=c |u|^{p\gamma}+d|v|^{q\gamma}$, with $c\neq0$.
		\end{description}
	\end{remark}
	
	For a function $\widetilde{H}$ satisfying \ref{H basic},\ref{H subcritic} and \ref{H scaling} with $\gamma=1$, Theorem 10.10 from \cite{Pe-Ag-OR-book} can be applied to ensure that
	the nonlinear eigenvalue problem
	\begin{equation*}
		\left\{
		\begin{aligned}
			-\Delta_p u &= \lambda \widetilde{H}_u(u,v), & \text{in } \Omega\\
			-\Delta_q v &= \lambda \widetilde{H}_v(u,v), &\text{in } \Omega\\
			u=v&=0,&\text{on } \partial\Omega
		\end{aligned}
		\right.
	\end{equation*}
	has a sequence of positive eigenvalues $\{\lambda_k\}_{k\in \mathbb{N}}$ such that $\lambda_k\nearrow\infty$ (see Theorem \ref{lambdak}). Considering this sequence, we will prove the following results. 
	
	\begin{theorem} \label{th 1 =1}
		Assume that $H$ satisfies \ref{H basic}-\ref{AR}, with $\gamma=1$ in \ref{H scaling}. Then,
		for $k,m\geq1$ satisfying $\lambda_{k}=\cdots=\lambda_{k+m-1}<\lambda_{k+m}$,
		there exist $\delta_k, \mu_k>0$ such that for all $\lambda\in(\lambda_k-\delta_k,\lambda_k)$ and $\mu\in(0,\mu_k)$ the system \eqref{Sys} has $m$ pairs of  nontrivial weak solutions $\pm w_j^{\lambda,\mu}$, $j=1,\cdots,m$, at positive energy levels, such that $w_j^{\lambda,\mu}\to0$ as $\lambda\to\lambda_k^-$ and $\mu\to0^+$.
	\end{theorem}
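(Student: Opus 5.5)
We work with the even $C^1$ functional on $W=W^{1,p}_0(\Omega)\times W^{1,q}_0(\Omega)$,
\[
E_{\lambda,\mu}(u,v)=\frac1p\int_\Omega|\nabla u|^p+\frac1q\int_\Omega|\nabla v|^q-\lambda\int_\Omega H(u,v)+\frac{\mu}{p^\ast}\int_\Omega|u|^{p^\ast}-\frac1{q^\ast}\int_\Omega|v|^{q^\ast},
\]
whose critical points are weak solutions of \eqref{Sys} by \ref{H basic}-\ref{H subcritic}. The plan is to apply the abstract critical point theorem for symmetric functionals on product spaces (the one underlying the eigenvalue sequence of Theorem~\ref{lambdak}, as in \cite{Marinho-Perera-2025}). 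With $\gamma=1$, the scaling $t\ast(u,v)=(t^{1/p}u,t^{1/q}v)$ makes $I(u,v)=\frac1p\|u\|^p+\frac1q\|v\|^q$ and $J(u,v)=\int_\Omega H(u,v)$ both homogeneous of degree one. The eigenvalues $\lambda_k$ are then cohomological-index minimax levels of $\Psi=I/J$ on the manifold $\mathcal M=\{I=1\}$. The theorem produces $m$ pairs of critical points below a threshold $c^\ast$ if two conditions hold: $E_{\lambda,\mu}$ satisfies (PS)$_c$ for $0<c<c^\ast$, and there are a symmetric set $A_0\subset\mathcal M$ with $i(A_0)\ge k+m-1$ and a symmetric set $B_0\subset \mathcal M$ with $i(\mathcal M\setminus B_0)\le k-1$ realizing the linking geometry along the rays $t\ast w$.

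\textbf{Step 1 (compactness).} We first show that (PS)$_c$ holds for
\[
0<c<c^\ast:=\frac1N S_q^{N/q},
\]
where $S_q$ is the Sobolev constant, for every $\mu>0$. Boundedness of a (PS) sequence comes from taking $E-\frac1r\langle E',(0,v)\rangle-\frac1p\langle E',(u,0)\rangle$ type combinations. Here \ref{AR} controls $H-\frac1pH_uu-\frac1qH_vv$, \ref{H_v ep Cep} absorbs $rH-H_vv$ into small multiples of $\|u\|_p^p$ and $|v|_{q^\ast}^{q^\ast}$, and the negative critical term $\frac{\mu}{p^\ast}|u|^{p^\ast}$ appears with a favorable sign. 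For the limit we use concentration–compactness (Lions) on $|\nabla u_n|^p$, $|\nabla v_n|^q$, $|u_n|^{p^\ast}$, $|v_n|^{q^\ast}$. Atoms of $u_n$ are harmless because the $u$-critical term is absorbing: testing with $\varphi_\varepsilon u_n$ gives $\nu_j\le -\mu\,\cdot$(atom) $\le 0$. Atoms of $v_n$ satisfy $\nu\ge S_q^{N/q}$. Plugging these into the energy identity, with \ref{H ep Cep} used to control the subcritical cross terms, contradicts $c<c^\ast$. Strong convergence then follows from the $(S_+)$ property of $-\Delta_p,-\Delta_q$.

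I expect this step to be the main obstacle. The difficulty is the anisotropic coupling: $H_u$ may grow in $v$ like $|v|^{q_1-1}$, so the $u$-equation sees $v$-concentration. The exponent restrictions in \ref{H subcritic}, together with \ref{H ep Cep}, are precisely what make the mixed terms $\int H_u(u_n,v_n)u_n\varphi_\varepsilon$ vanish as $\varepsilon\to0$.

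\textbf{Step 2 (geometry).} Since $\lambda_{k+m-1}=\lambda_k$, take $A_0=\Psi^{\lambda_k}$, which by Theorem~\ref{lambdak} has $i(A_0)\ge k+m-1$. On the rays $t\ast w$ with $w\in A_0$,
\[
E_{\lambda,\mu}(t\ast w)\le t\Bigl(1-\frac{\lambda}{\lambda_k}\Bigr)+\mu C t^{p^\ast/p}-\frac{t^{q^\ast/q}}{q^\ast}\int_\Omega|w_2|^{q^\ast}.
\]
Here $p^\ast/p=q^\ast/q=N/(N-p)$ and $N/(N-q)$ respectively, both greater than one. This gives $\sup_{A}E_{\lambda,\mu}\le C\bigl((\lambda_k-\lambda)^{\kappa}+\mu\,(\cdot)\bigr)$, which tends to $0$ as $\lambda\to\lambda_k^-$ and $\mu\to0^+$. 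A compactness argument on $A_0$ ensures the $v$-component is nonzero, or we handle it with a uniform bound; the region of large $t$ is controlled by the $-|v|^{q^\ast}$ term or by a truncation to a bounded cone.

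On the other side, take $B_0=\{\Psi\ge\lambda_{k+m}\}$, so that $i(\mathcal M\setminus B_0)\le k+m-1$ (we use the version of the theorem adapted to this count). For $\lambda<\lambda_{k+m}$,
\[
E_{\lambda,\mu}(t\ast w)\ge t\Bigl(1-\frac{\lambda}{\lambda_{k+m}}\Bigr)-Ct^{q^\ast/q},
\]
which is positive for small $t=\rho$.

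\textbf{Step 3 (conclusion).} Choose $\delta_k,\mu_k$ so that the sup over $A$ is below $c^\ast$. The abstract theorem then yields $m$ distinct pairs $\pm w_j^{\lambda,\mu}$ with $0<E_{\lambda,\mu}(w_j)\le\sup_A E_{\lambda,\mu}\to0$. Finally, since (PS) sequences at levels tending to $0$ are bounded by Step 1 with norms controlled by the level, we get $E(w_j)-\frac1{r}\langle E'(w_j),\cdot\rangle\to0$. This forces $\|w_j\|\to0$, giving $w_j^{\lambda,\mu}\to0$ as $\lambda\to\lambda_k^-$ and $\mu\to0^+$.
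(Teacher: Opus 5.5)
Your overall framework matches the paper's: Theorem~\ref{Theorem 2.5 MP} on $\mathcal M$ with the scaling $w_t$, sets defined through $\Psi$, and a threshold coming from $S_q$. However, Step~2 fails at the index count.

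\textbf{The index count in Step 2.} With $B_0=\{\Psi\ge\lambda_{k+m}\}$, Theorem~\ref{lambdak}\ref{lambdak(iii)} gives $i(\mathcal M\setminus B_0)=k+m-1=i(A_0)$. To get $m$ pairs from Theorem~\ref{Theorem 2.5 MP} you need $i(\mathcal M\setminus B_0)\le i(A_0)-m$, so your sets yield nothing, and no version of the theorem is ``adapted to this count''. The missing idea is that the hypothesis $\lambda<\lambda_k$ lets you put $B_0$ at the \emph{lower} level. The paper takes the closed set $B_0=\Psi_{\lambda_k}\cup(\mathcal M\setminus\mathcal M^+)$. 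Then $\mathcal M\setminus B_0$ is either empty or $\mathcal M^+\setminus\Psi_{\lambda_l}$ with $\lambda_{l-1}<\lambda_l=\lambda_k$, so $i(\mathcal M\setminus B_0)\le k-1$. On $B$ one has $E(w_\rho)\ge\rho(1-\lambda/\lambda_k)-c_1\rho^{q^*/q}>0$ for small $\rho$, and the count becomes $(k+m-1)-(k-1)=m$.

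\textbf{Compactness of $A_0$.} Your choice $A_0=\Psi^{\lambda_k}$ is not admissible either. Whenever $\lambda_1<\lambda_k$ it contains the nonempty relatively open set $\{\Psi<\lambda_k\}$ of the infinite-dimensional $\mathcal M$, so it is not compact, while Theorem~\ref{Theorem 2.5 MP} requires $A_0$ compact. The paper fixes $\tau=\lambda_{k+m-1}+\varepsilon(\lambda_k-\lambda)\in(\lambda_k,\lambda_{k+m})$. It then takes a compact symmetric subset of index $k+m-1$ of the open set $\mathcal M^+\setminus\Psi_\tau$ (Degiovanni--Lancelotti). The lower bound $\frac1{q^*}\int|v|^{q^*}\ge 2c_2$ on $A_0$ comes from $\int H>1/\tau>1/\lambda_{k+m}$ and Lemma~\ref{Lemma H}\ref{Lemma H p star}, uniformly in $\lambda$. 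Together with $\mu<c_2/c_1$ (needed when $p=q$), this gives $\sup_X E\le\max_{t\ge0}(a_kt-c_2t^{q^*/q})$ with $a_k=1-\lambda/\tau+\mu c_1$ small.

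\textbf{Step 1 threshold.} The claim that (PS)$_c$ holds for all $c<\frac1NS_q^{N/q}$ and every $\mu>0$ is not justified. Since $\gamma=1$ forces equality in \ref{AR}, the weak limit $w=(u,v)$ is a critical point with $E(w)=\frac1N\int|v|^{q^*}-\frac\mu N\int|u|^{p^*}$, which may be negative. A $v$-atom therefore only yields $c\ge\frac1NS_q^{N/q}-\frac\mu N\int|u|^{p^*}$. The paper controls the last term by a uniform bound $K_{\tilde\lambda}$ on (PS) sequences and works with $c^*=\frac1NS_q^{N/q}-\mu d_{\lambda_k}$. This is why $\mu_k$ also involves $(\frac1NS_q^{N/q}-b_k)/d_{\lambda_k}$.

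\textbf{Step 3 convergence.} The combination $E-\frac1r\langle E',(0,v)\rangle$ cannot force $\|w_j\|\to0$, because the constant $C_\varepsilon$ from \ref{H_v ep Cep} does not vanish. Norms of (PS) sequences are bounded, not controlled by the level. Use instead
$E(w_j)-\frac1pE'(w_j)(u_j,0)-\frac1qE'(w_j)(0,v_j)=-\lambda\int\bigl[H-\frac1pH_uu_j-\frac1qH_vv_j\bigr]+\frac1N\int|v_j|^{q^*}-\frac\mu N\int|u_j|^{p^*}$.
By \ref{AR} and $\mu\int|u_j|^{p^*}\to0$ (uniform boundedness), you get $\int|v_j|^{q^*}\to0$. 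Then $\int H(w_j)\to0$ by Lemma~\ref{Lemma H}\ref{Lemma H p star}, and finally $I(w_j)\to0$ follows from $E(w_j)\to0$.
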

	
	Since  $\lambda_k\nearrow\infty$,  the following result is an immediate consequence of Theorem \ref{th 1 =1}.
	
	\begin{corollary}
		If $H$ satisfies \ref{H basic}-\ref{AR}, with $\gamma=1$ in \ref{H scaling}, then for each $k\geq1$, there exist $\delta_k, \mu_k>0$  such that the system \eqref{Sys} has a pair of nontrivial solutions for all $\lambda\in(\lambda_k-\delta_k,\lambda_k)$ and $\mu\in(0,\mu_k)$.
	\end{corollary}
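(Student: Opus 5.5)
This is immediate from Theorem~\ref{th 1 =1}; the plan is to reduce the given $k$ to a suitable block of equal eigenvalues. Fix $k\geq1$. Since $\lambda_j\nearrow\infty$, the set $\{j\geq k:\lambda_j=\lambda_k\}$ is finite, and in particular $\lambda_j>\lambda_k$ for all large $j$. Let
\[
k_0=\min\{j\geq1:\lambda_j=\lambda_k\}\le k,\qquad m=\#\{j\geq1:\lambda_j=\lambda_k\}\geq1 .
\]
Because the sequence is nondecreasing, the indices with $\lambda_j=\lambda_k$ form the consecutive block $k_0,\dots,k_0+m-1$. Hence $\lambda_{k_0}=\cdots=\lambda_{k_0+m-1}<\lambda_{k_0+m}$, and $k$ lies in this block.

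Theorem~\ref{th 1 =1} applies with $k_0$ and $m$ in place of $k$ and $m$. It yields $\delta_{k_0},\mu_{k_0}>0$ such that, for every $\lambda\in(\lambda_{k_0}-\delta_{k_0},\lambda_{k_0})$ and $\mu\in(0,\mu_{k_0})$, the system \eqref{Sys} has $m\geq1$ pairs of nontrivial solutions. We then set $\delta_k:=\delta_{k_0}$ and $\mu_k:=\mu_{k_0}$. Since $\lambda_k=\lambda_{k_0}$, the interval $(\lambda_k-\delta_k,\lambda_k)$ is the same as $(\lambda_{k_0}-\delta_{k_0},\lambda_{k_0})$. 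Taking $j=1$ gives the pair $\pm w_1^{\lambda,\mu}$, which proves the corollary.

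There is no real obstacle here. The only point that needs care is the index bookkeeping: the theorem requires a block of equal eigenvalues that is strictly followed by a larger one, and the block must be chosen to start at its first index, so the hypothesis $\lambda_{k_0}=\cdots=\lambda_{k_0+m-1}<\lambda_{k_0+m}$ holds exactly. Both facts follow from $\lambda_j\nearrow\infty$ and monotonicity. As a remark, the corollary only asserts one pair of solutions, which is why taking $j=1$ suffices.
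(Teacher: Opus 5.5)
Your proposal is correct and follows the paper: the corollary is read off from Theorem~\ref{th 1 =1}, using $\lambda_j\nearrow\infty$ and monotonicity to find a block of equal eigenvalues containing $\lambda_k$ that is strictly followed by a larger eigenvalue, and then taking one of the resulting pairs. Passing to the first index $k_0$ of the block is harmless but not needed, since Theorem~\ref{th 1 =1} does not require $\lambda_{k-1}<\lambda_k$; you could apply it directly with the given $k$ and $m=\#\{j\ge k:\lambda_j=\lambda_k\}$.
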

	
	When $\gamma>1$, we have the following multiplicity result for large values of $\lambda$ and small values of $\mu>0$.
	\begin{theorem} \label{Th2 >1}
		If $H$ satisfies \ref{H basic}-\ref{AR}, with $\gamma>1$ in \ref{H scaling}, then
		for any $m\geq1$, 
		there exists $\Lambda_m^*>0$  such that for each $\lambda\ge \Lambda_m^*$, there exists $\mu_\lambda>0$ such that for all $\mu\in(0,\mu_\lambda)$, the system \eqref{Sys} has $m$ pairs of  nontrivial weak solutions $\pm w_j^{\lambda,\mu}$, $j=1,\cdots,m$ at positive energy levels. In particular, the number of solutions goes to infinity as $\lambda\to\infty$.
	\end{theorem}
	
	On the other hand, for $\gamma<1$ we obtain infinitely many solutions for $\lambda,\mu>0$ small.
	\begin{theorem} \label{Th3 <1}
		Assume that $H$ satisfies \ref{H basic}-\ref{H scaling} with $\gamma<1$. Then
		there exist  $\Lambda^*,\,\mu^*>0$  such that for all $\lambda\in(0,\Lambda^*)$ and $\mu\in(0,\mu^*)$, the system \eqref{Sys} has a sequence of  weak solutions $\{w_j^{\lambda,\mu}\}_{j\in\mathbb{N}}$ at negative energy levels such that $w_j^{\lambda,\mu}\to0$ as $\lambda\to0^+$.
	\end{theorem}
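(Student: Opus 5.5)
We prove Theorem \ref{Th3 <1} by a truncation argument combined with the symmetric mountain pass/genus theorem of Clark type for even functionals that are bounded below. Work in $W=W^{1,p}_0(\Omega)\times W^{1,q}_0(\Omega)$ with the energy
\[
I_{\lambda,\mu}(u,v)=\frac1p\|u\|^p+\frac1q\|v\|^q-\lambda\int_\Omega H(u,v)+\frac{\mu}{p^\ast}\int_\Omega|u|^{p^\ast}-\frac1{q^\ast}\int_\Omega|v|^{q^\ast}.
\]
It is even and $C^1$ by \ref{H basic}-\ref{H subcritic}.

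\textbf{Step 1: truncation.} The functional is not bounded below because of $-\frac1{q^\ast}\int|v|^{q^\ast}$. Set $\rho(u,v)=\frac1p\|u\|^p+\frac1q\|v\|^q$. Using \ref{H ep Cep} and \ref{H scaling} (the latter via the anisotropic scaling $(t^{1/p}u,t^{1/q}v)$), estimate
\[
\lambda\int H(u,v)\le \lambda C\,\rho^{\gamma}
\]
for small $\rho$, or more crudely bound it by powers $\rho^{\tilde q/q}$, etc. By Sobolev, $\int|v|^{q^\ast}\le C\rho^{q^\ast/q}$. Since $\gamma<1$, the function $g(t)=t-\lambda Ct^\gamma-Ct^{q^\ast/q}$ is negative near $0$ and has a positive interval $(t_0,t_1)$ provided $\lambda<\Lambda^\ast$. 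Choose a cutoff $\psi\in C^\infty$, even in $(u,v)$ through $\rho$, with $\psi=1$ for $\rho\le t_0'$ and $\psi=0$ for $\rho\ge t_1'$. Replace the critical source term by $\psi(\rho)\frac1{q^\ast}\int|v|^{q^\ast}$. The truncated functional $J$ is then even, coercive and bounded below; the $\mu$-term is nonnegative, so it only helps. Moreover $J<0$ implies $\rho<t_0'$, where $J=I_{\lambda,\mu}$. This gives the bound $\|w\|\to0$ as $\lambda\to0$, since $t_0\to0$.

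\textbf{Step 2: local Palais--Smale condition.} This is the main obstacle. Show that $J$ satisfies (PS)$_c$ for all $c<0$ once $\lambda,\mu$ are small. A (PS)$_c$ sequence with $c<0$ eventually lies in $\{\rho<t_0'\}$, so it is bounded and is a (PS) sequence for $I_{\lambda,\mu}$. Apply the concentration-compactness principle of Lions to both $|\nabla u|^p,|u|^{p^\ast}$ and $|\nabla v|^q,|v|^{q^\ast}$. The subcritical growth \ref{H subcritic}, with the mixed exponents $q_1<1+q^\ast(p^\ast-1)/p^\ast$, makes the coupling terms compact via Hölder. The negative term $-\mu|u|^{p^\ast-2}u$ creates atoms $\nu^u_j$ that must satisfy $\sigma_j\le-\mu\nu_j$, forcing $\nu^u_j=0$; for this reason the sign helps the $u$-component. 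For $v$ an atom satisfies $\nu_j\ge S^{N/q}$, which yields the level estimate $c\ge \frac1N S^{N/q}-(\text{lower order})$. This is incompatible with $c<0$: one estimates $c=\lim (J-\frac1{q^\ast}J'\cdot(0,v_n))$ and uses \ref{H ep Cep} to absorb the $H$-terms for small $\lambda$, the delicate point being the control of these mixed terms with the $\mu$ part. Strong convergence then follows from the $(S_+)$ property of $-\Delta_p,-\Delta_q$.

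\textbf{Step 3: genus.} For each $k$, choose a $k$-dimensional subspace $E_k$ of pairs $(u,v)$ with $uv\ne0$ on a set, using \ref{H basic} and $H>0$ for $uv\neq0$. On the anisotropic sphere $\{(t^{1/p}u,t^{1/q}v)\}$ with small $t$, by \ref{H scaling} we have $J\le t-\lambda c_k t^\gamma+\mu Ct^{p^\ast/p}<0$ because $\gamma<1$. Hence $\gamma(J^{-\epsilon_k})\ge k$, and Clark's theorem (in the form: $J$ even, bounded below, (PS)$_c$ for $c<0$, $J(0)=0$) gives infinitely many critical values $c_k<0$, $c_k\to0$. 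These are critical points of $I_{\lambda,\mu}$ lying in $\rho<t_0'(\lambda)\to0$. This yields the solutions $w_j^{\lambda,\mu}$ at negative energy, with $w_j^{\lambda,\mu}\to0$ as $\lambda\to0^+$.
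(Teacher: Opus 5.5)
Your proposal is correct, but it takes a different route from the paper at each of its three steps.

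\emph{Truncation.} You cut off only the critical source term, as in Garc\'{\i}a Azorero--Peral's original argument. The paper instead multiplies the whole functional, $\tilde E_{\lambda,\mu}=\xi_\lambda(I(w))E_{\lambda,\mu}(w)$. Both versions give that negative values force $I(w)<R_1(\lambda)$ and that the truncated functional agrees with $E_{\lambda,\mu}$ near such points. Your coercivity claim needs $\int_\Omega H(w)\,dx\le C\,I(w)^\gamma$ for \emph{all} $w$, not just small $\rho$. The scaling gives exactly this (Lemma \ref{Lemma H}\ref{Lemma H < I(w)}).

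\emph{Compactness.} You use Lions' concentration-compactness, whereas the paper (Lemma \ref{Lemma PS bounded}) uses a.e.\ convergence of gradients and Brezis--Lieb. The paper's level inequality comes from subtracting $\frac1N E'(u,v)(u,v)$. That introduces $-\frac{\mu}{N}\int_\Omega|u|^{p^\ast}dx$, which is why the paper needs $\mu$ small. Your identity $c=\lim[E(w_n)-\frac1{q^\ast}E'(w_n)(0,v_n)]$ instead keeps $\frac1p\int_\Omega|\nabla u_n|^pdx+\frac{\mu}{p^\ast}\int_\Omega|u_n|^{p^\ast}dx\ge0$. So a $v$-atom gives $c\ge\frac1N S_q^{N/q}-\lambda C(K)$, where $C(K)$ bounds $|\int_\Omega(H-\frac1{q^\ast}H_vv)\,dx|$ on the ball of radius $K$. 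That bound comes from $(h_2)$, not $(h_3)$, since $(h_3)$ says nothing about $H_vv$. The ``delicate $\mu$-part'' you anticipate therefore does not arise, and only $\lambda$ must be small.

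\emph{Index sets.} The paper takes compact symmetric subsets of $\mathcal M^+$ of cohomological index $k$ from the auxiliary eigenvalue problem with $\widetilde H=|u|^\alpha|v|^\beta$ (Theorem \ref{lambdak} plus Degiovanni--Lancelotti). You use finite-dimensional subspaces and the genus. You should make the subspace explicit, because a generic one may contain some $(u,0)$, where $\int_\Omega H$ can vanish. For example, take $E_k=\{(\varphi,\varphi):\varphi\in F_k\}$ with $F_k\subset C_c^\infty(\Omega)$ of dimension $k$, so every nonzero element has $uv\ne0$ on an open set. Then $\{w\in E_k:I(w)=1\}$ is oddly homeomorphic to $S^{k-1}$, and $w\mapsto w_t$ is an odd homeomorphism of $W$. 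Compactness then gives $\int_\Omega H\ge c_k>0$ on that set.

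\emph{Comparison.} Your route is self-contained and more elementary. Neither your compactness step nor the estimate $t-\lambda c_kt^\gamma+\mu Ct^{p^\ast/p}<0$ for small $t$ needs $\mu$ small. So your route actually gives the conclusion for every $\mu>0$, with $\Lambda^\ast$ independent of $\mu$. The paper's route buys uniformity with the $\gamma\ge1$ theorems, reusing the same eigenvalue-based index sets and the same compactness computation.
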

	
	Considering $H(u,v)=|u|^a|v|^b$, with $a,b>1$ satisfying $\frac{a}{p_0}+\frac{b}{q_0}=1$ for some  $p_0\in(1,p^*)$ and $q_0\in(1,q^*)$, we see that $H$ satisfies \ref{H basic}-\ref{H scaling} with $\gamma=\frac{a}{p}+\frac{b}{q}$. Moreover, it satisfies \ref{H_v ep Cep} if $p_0\leq p$ and \ref{AR} if $\gamma\geq1$. In particular,
	\[
	\begin{cases}
		p_0=p\,\,\text{and}\,\,q_0=q\quad\Rightarrow &\gamma=1\\
		p_0\geq p\,\,\text{and}\,\,q_0>q\quad\Rightarrow &\gamma>1\\
		p_0\leq p\,\,\text{and}\,\,q_0<q\quad\Rightarrow &\gamma<1,
	\end{cases}
	\]
	so we can apply our theorems to ensure the existence and multiplicity of solutions to system \eqref{Sys} with this model function. 
	
	Proofs of our multiplicity results will be based on a recent abstract critical point theorem for symmetric functionals on product spaces (see \cite[Theorem 2.5]{Marinho-Perera-2025}) and a classical result of critical point theory (see, e.g., \cite[Proposition 3.36]{Pe-Ag-OR-book}).
	
	The paper is organized as follows. In Section \ref{Sec prel}, we present the variational framework that will be used throughout this work, along with the abstract results underlying our proofs, including the one concerning the existence of nonlinear eigenvalues. In Section \ref{Sec CC}, we establish a compactness condition for the functional associated with Problem \eqref{Sys}. Section \ref{Sec proofs} is devoted to the proofs of our main results.

	\section{Preliminary results}\label{Sec prel}
	
	We will work in the product space  $W=W^{1,p}_0(\Omega)\times W^{1,q}_0(\Omega)$ endowed with the norm
	\[
	\|w\|=\|\nabla u\|_p+\|\nabla v\|_q,\quad w=(u,v)\in W.
	\]
	Since $H\in C^1(\mathbb{R}^2)$ satisfies \ref{H subcritic}, there are $p_0\in(1,p^\ast)$, $q_0\in(1,q^\ast)$ and a positive constant $M_H$  satisfying
	\begin{equation}\label{H MH}
		|H(u,v)|\leq M_H(1+|u|^{p_0}+ |v|^{q_0}),\quad\forall (u,v)\in\mathbb{R}^2.
	\end{equation}
	Then one can see that the functional $E:W\to\mathbb{R}$ associated to problem \eqref{Sys}, given by
	\begin{equation*}
		E(u,v)=\frac{1}{p}\int_\Omega|\nabla u|^p\,dx+\frac{1}{q}\int_\Omega|\nabla v|^q\,dx -\lambda\int_\Omega H(u,v)\,dx+\frac{\mu}{p^{\ast}}\int_\Omega|u|^{p^{\ast}}\,dx-\frac{1}{q^{\ast}}\int_\Omega|v|^{q^{\ast}}\,dx,
	\end{equation*}
	is of class $C^1$, with derivative
	\begin{equation}\label{E'}
		\begin{array}{ccl}
			E'(u,v)(\varphi,\psi)&=&\displaystyle\int_\Omega\left(|\nabla u|^{p-2}\nabla u\nabla \varphi+|\nabla v|^{q-2}\nabla v\nabla \psi\right)\,dx -\lambda\int_\Omega \left(H_u(u,v)\varphi+H_v(u,v)\psi\right)\,dx\\
			&&\displaystyle+\mu\int_\Omega|u|^{p^{\ast}-2}u\varphi\,dx-\int_\Omega|v|^{q^{\ast}-2}v\psi\,dx.
		\end{array}
	\end{equation}
	So critical points of $E$ are weak solutions of \eqref{Sys}.
	
	\subsection{Linking theorems}
	
	We will prove Theorem \ref{th 1 =1} and Theorem \ref{Th2 >1} using two abstract results proved in Theorem 2.5 and Corollary 2.6 in \cite{Marinho-Perera-2025} that are based on the $\mathbb{Z}_2$-cohomological index of Fadell and Rabinowitz
	\cite{Fadell-Rabinowitz-1978}.  
	\begin{definition}[see \cite{Fadell-Rabinowitz-1978}]\label{Definition 301}
		Let $\mathcal{A}$ denote the class of symmetric subsets of $W \setminus \set{0}$. For $A \in \mathcal{A}$, let $\overline{A} = A/\mathbb{Z}_2$ be the quotient space of $A$ with each $u$ and $-u$ identified, let $f : \overline{A} \to \mathbb{R}\text{P}^\infty$ be the classifying map of $\overline{A}$, and let $f^\ast : H^\ast(\mathbb{R}\text{P}^\infty) \to H^\ast(\overline{A})$ be the induced homomorphism of the Alexander-Spanier cohomology rings. The cohomological index of $A$ is defined by
		\[
		i(A) = \begin{cases}
			0 & \text{if } A = \emptyset\\[5pt]
			\sup \set{m \ge 1 : f^\ast(\omega^{m-1}) \ne 0} & \text{if } A \ne \emptyset,
		\end{cases}
		\]
		where $\omega \in H^1(\mathbb{R}\text{P}^\infty)$ is the generator of the polynomial ring $H^\ast(\mathbb{R}\text{P}^\infty) = \mathbb{Z}_2[\omega]$.
	\end{definition}
	
	\begin{example}
		The classifying map of the unit sphere $S^N$ in $\mathbb{R}^{N+1},\, N \ge 0$ is the inclusion $\mathbb{R}\text{P}^N \hookrightarrow \mathbb{R}\text{P}^\infty$, which induces isomorphisms on the cohomology groups $H^l$ for $l \le N$, so $i(S^N) = N + 1$.
	\end{example}
	
	Before stating the abstract results, we need to establish some notations and definitions. Let $I:W\to\mathbb{R}$ be given by
	\begin{equation}\label{I defi}
		I(u,v)=\frac{1}{p}\int_\Omega|\nabla u|^p\,dx+\frac{1}{q}\int_\Omega|\nabla v|^q\,dx,
	\end{equation}
	and define
	\begin{equation}\label{M def}
		\mathcal{M} = \{ w \in W \setminus \{0\} : I(w) = 1 \}.
	\end{equation}
	Considering the scaling
	\begin{equation}\label{scaling def}
		[0,\infty)\times W\to W, \quad (t,w)\mapsto w_t:=(t^{1/p}u,t^{1/q}v),
	\end{equation}
	we see that
	\[
	I(w_t)=tI(w),\quad\forall t\geq0\,\,\text{and}\,\,w\in W.
	\]
	The radial projection on $\mathcal{M}$, $\pi: W\setminus\{0\}\to \mathcal{M}$, is  given by $\pi(w)=w_{t_w}$ where $t_w=I(w)^{-1}$. For $\rho > 0$, we define
	\begin{equation*}
		\mathcal{M}_\rho = \left\{ w \in W \setminus \{0\}:I(w) = \rho \right\} = \{ w_\rho : w \in \mathcal{M} \}.
	\end{equation*}
	Let $E \in C^1(W,\mathbb{R})$ be an even functional, that is,
	\[
	E(-u) = E(u) \quad \text{for all } u \in W.
	\]
	Assume that there exists $c^\ast > 0$ such that $E$ satisfies the $(PS)_c$ condition for every $c \in (0,c^\ast)$. Let $\Gamma$ denote the group of odd homeomorphisms of $W$ that coincide with the identity outside $E^{-1}(0,c^\ast)$. Let $\mathcal{A}^\ast$ be the class of symmetric subsets of $W$, i.e., subsets $M \subset W$ such that if $u \in M$, then $-u \in M$. According to Benci \cite{V. Benci}, for any $M \in \mathcal{A}^*$, its pseudo-index $i^*(M)$ with respect to $i$, $\mathcal{M}_\rho$, and $\Gamma$ is defined by
	\[
	i^*(M) = \min_{\gamma \in \Gamma} i(\gamma(M) \cap \mathcal{M}_\rho).
	\]
	
	\begin{theorem}[\cite{Marinho-Perera-2025}, Theorem 2.5]\label{Theorem 2.5 MP} Consider $E \in C^1(W,\mathbb{R})$ an even functional and suppose that $E$ satisfies the $(PS)_c$ condition for all $c\in(0,c^*)$, for some $c^*>0$.
		Let $A_0$ and $B_0$ be symmetric subsets of $\mathcal{M}$ such that $A_0$ is compact, $B_0$ is closed, and
		\begin{equation*}
			i(A_0) \geq k + m - 1, \qquad i(\mathcal{M} \setminus B_0) \leq k - 1
		\end{equation*}
		for some $k, m \geq 1$. Let $R > \rho > 0$ and let
		\[
		X = \{w_t : w \in A_0,\, 0 \leq t \leq R \}, \quad
		A = \{w_R : w \in A_0 \}, \quad
		B = \{w_\rho : w \in B_0 \}.
		\]
		Assume that
		\begin{equation*}
			\sup_{w \in A} E(w) \leq 0 < \inf_{w \in B} E(w), \qquad
			\sup_{w \in X} E(w) < c^*.
		\end{equation*}
		For $j = k, \ldots, k + m - 1$, let
		\(
		\mathcal{A}^*_j = \{M \in \mathcal{A}^* : M \text{ is compact and } i^*(M) \geq j \}
		\)
		and set
		\[
		c^*_j := \inf_{M \in \mathcal{A}^*_j} \max_{w \in M} E(w).
		\]
		Then 
		$$
		\inf_{w\in B}E(w) \leq c^*_k \leq \cdots \leq c^*_{k+m-1} \leq \sup_{w\in X}E(w),
		$$
		each $c^*_j$ is a critical value of $E$, and $E$ has $m$ distinct pairs of associated critical points.
	\end{theorem}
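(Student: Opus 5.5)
This is the abstract linking theorem of \cite{Marinho-Perera-2025}. I would prove it by a pseudo-index minimax argument in the spirit of Benci, with the scaled spheres $\mathcal{M}_\rho$ playing the role of spheres. The argument has four steps.

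\textbf{Step 1: properties of the pseudo-index.} I first record the properties of $i^*$ that follow from those of the cohomological index $i$:
\begin{enumerate}[label=(\roman*)]
\item monotonicity: $M\subset M'$ implies $i^*(M)\le i^*(M')$;
\item invariance: $i^*(h(M))=i^*(M)$ for $h\in\Gamma$;
\item subadditivity: $i^*(M\cup N)\ge i^*(M)$ and $i^*(\overline{M\setminus N})\ge i^*(M)-i(N)$ whenever $N$ is a closed symmetric set, using the subadditivity and continuity of $i$.
\end{enumerate}
Consequently $\mathcal{A}^*_k\supset\cdots\supset\mathcal{A}^*_{k+m-1}$, and the monotonicity $c^*_k\le\cdots\le c^*_{k+m-1}$ is immediate.

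\textbf{Step 2: upper bound, $X\in\mathcal{A}^*_{k+m-1}$.} The set $X$ is compact as the continuous image of $A_0\times[0,R]$, and it is symmetric. Fix $\gamma\in\Gamma$. Since $E\le 0$ on $A$ and $\gamma$ is the identity outside $E^{-1}(0,c^*)$, we have $\gamma=\mathrm{id}$ on $A$; also $E(0)=0$, so $\gamma(0)=0$. The map $(w,t)\mapsto w_t$ identifies $X$ with a cone over $A_0$, with the vertex at $0$ and the base $A$ at level $R$. Suppose $i(\gamma(X)\cap\mathcal{M}_\rho)<i(A_0)$. Then there is a symmetric neighbourhood $U$ of $\gamma(X)\cap\mathcal{M}_\rho$ with the same index. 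The set $\gamma(X)\setminus U$ splits into the part where $I<\rho$, which contains $0$, and the part where $I>\rho$, which contains $A$. Pulling back by $\gamma$ and using connectedness of the segments $t\mapsto w_t$ from $0$ to $w_R$, one sees that $\pi\circ\gamma$ maps the set $\{w_t:I(\gamma(w_t))=\rho\}$ onto a set whose projection to $A_0$ is all of $A_0$. This gives an odd continuous map from $\gamma(X)\cap\mathcal{M}_\rho$ onto a set containing, up to homotopy, $A_0$, which is the standard cone-intersection lemma. Hence $i(\gamma(X)\cap\mathcal{M}_\rho)\ge i(A_0)\ge k+m-1$. Therefore $c^*_{k+m-1}\le\max_X E=\sup_X E$.

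\textbf{Step 3: lower bound.} Let $M\in\mathcal{A}^*_k$. Then $i(M\cap\mathcal{M}_\rho)\ge k$, by taking $\gamma=\mathrm{id}$. Since $i(\mathcal{M}_\rho\setminus B)=i(\mathcal{M}\setminus B_0)\le k-1$, where the scaling $w\mapsto w_\rho$ is an odd homeomorphism, monotonicity of $i$ gives $M\cap B\ne\emptyset$. Hence $\max_M E\ge\inf_B E$, and so $c^*_k\ge\inf_B E>0$.

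\textbf{Step 4: criticality and multiplicity.} All levels lie in $(0,c^*)$, where (PS) holds. Suppose $c=c^*_j$ is not critical. The equivariant deformation lemma gives an odd homeomorphism $\eta\in\Gamma$ with $\eta(E^{c+\varepsilon})\subset E^{c-\varepsilon}$, where $\varepsilon$ is chosen so that $[c-\varepsilon,c+\varepsilon]\subset(0,c^*)$, which guarantees that $\eta$ fixes everything outside $E^{-1}(0,c^*)$. Taking $M$ almost optimal, $\eta(M)\in\mathcal{A}^*_j$ by invariance, which contradicts the definition of $c$. For multiplicity, suppose $c^*_j=\cdots=c^*_{j+l}=c$. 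Then the critical set $K_c$ is compact by (PS) and does not contain $0$, since $c>0$. If $i(K_c)\le l$, take a closed symmetric neighbourhood $N$ with $i(N)=i(K_c)$ and deform $E^{c+\varepsilon}\setminus N$ into $E^{c-\varepsilon}$. Subadditivity of $i^*$ then gives a set in $\mathcal{A}^*_j$ with max below $c$, a contradiction. So $i(K_c)\ge l+1\ge 2$ when $l\ge1$, and $K_c$ is infinite. In all cases we obtain $m$ distinct pairs of critical points.

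The main obstacle is Step 2, the intersection property $i(\gamma(X)\cap\mathcal{M}_\rho)\ge i(A_0)$ for every $\gamma\in\Gamma$ under the nonlinear scaling $w_t$. Here I would use the homotopy $(s,w)\mapsto$ the first point of the curve $t\mapsto\gamma(w_t)$ that meets $\mathcal{M}_\rho$. Because this first-hit map need not be continuous, I would instead work with the open sets $\{I\circ\gamma<\rho\}$ and $\{I\circ\gamma>\rho\}$ and apply the standard cohomological index lemma for separating sets.
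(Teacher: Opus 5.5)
The paper gives no proof of this theorem; it is quoted from \cite{Marinho-Perera-2025}. Your architecture is the standard pseudo-index one, and Steps 1, 3 and 4 are sound: the lower bound via $i(\mathcal{M}_\rho\setminus B)=i(\mathcal{M}\setminus B_0)\le k-1$, and Benci's deformation argument for criticality and for $i(K_c)\ge l+1$. The gap is in Step 2, which is the only nontrivial point, and the argument you write there proves the reverse inequality. Every point of $\gamma(X)\cap\mathcal{M}_\rho$ is $\gamma(w_t)$ with $w\in A_0$ and $t\in(0,R]$. So the map you describe is $\pi\circ\gamma^{-1}\colon\gamma(X)\cap\mathcal{M}_\rho\to A_0$, $\gamma(w_t)\mapsto w$. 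It is odd and continuous, and connectedness of the segments makes it onto. But monotonicity of $i$ then gives $i(\gamma(X)\cap\mathcal{M}_\rho)\le i(A_0)$. Surjectivity carries no information in the other direction: an odd map can send two antipodal segments, of index $1$, onto $S^1$, of index $2$. A lower bound needs an odd map \emph{into} a neighbourhood of $\gamma(X)\cap\mathcal{M}_\rho$ from a set of index at least $i(A_0)$, that is, a continuous crossing time on $A_0$. As you note yourself, no such crossing time need exist.

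Your closing remark points to the right tool, the piercing property of the cohomological index. It says: if $C,C_0,C_1$ are closed symmetric subsets of $W\setminus\{0\}$ and $\varphi\colon C\times[0,1]\to C_0\cup C_1$ is continuous, odd in the first variable, with closed image, $\varphi(C\times\{0\})\subset C_0$ and $\varphi(C\times\{1\})\subset C_1$, then $i(\varphi(C\times[0,1])\cap C_0\cap C_1)\ge i(C)$. Its proof is cohomological, not a construction of an odd map $A_0\to\gamma(X)\cap\mathcal{M}_\rho$. It must also be applied away from the vertex of the cone, where the index is undefined.

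Concretely, $\gamma(0)=0$ by oddness, and $w_t\to0$ uniformly on $\mathcal{M}$ as $t\to0$. Hence there is $\varepsilon\in(0,1)$ with $I(\gamma(w_{\varepsilon R}))<\rho$ for all $w\in A_0$. Take $C=A_0$ and $\varphi(w,s)=\gamma(w_{(\varepsilon+s(1-\varepsilon))R})$. This map never vanishes because $\gamma$ is injective, and its image is compact, so $\delta:=\min I\circ\varphi\in(0,\rho)$. Put $C_0=\{\delta\le I\le\rho\}$ and $C_1=\{I\ge\rho\}$. The endpoint conditions hold because $I(\varphi(w,0))<\rho$, $\gamma=\mathrm{id}$ on $A$, and $I(w_R)=R>\rho$. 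Therefore $i(\gamma(X)\cap\mathcal{M}_\rho)\ge i(\varphi(A_0\times[0,1])\cap\mathcal{M}_\rho)\ge i(A_0)\ge k+m-1$ for every $\gamma\in\Gamma$, that is, $X\in\mathcal{A}^*_{k+m-1}$.

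One minor point: you use $E(0)=0$, which is not a hypothesis of the statement. You do not need it for $\gamma(0)=0$. However, your claim $0\notin K_c$ in Step 4 requires $E(0)\ne c$, since $E'(0)=0$ for even $E$. This holds in the paper's applications, where $E(0)=0$; otherwise it should be added as an assumption.
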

	When $k=1$, one can take $B_0=\mathcal{M}$. This gives the following corollary.
	\begin{corollary}[\cite{Marinho-Perera-2025}, Corollary 2.6]\label{Cor-2.6}
		Let $A_0$  be a compact symmetric   subset of $\mathcal{M}$ with
		\(
		i(A_0) =m\geq1.
		\)
		Let $R > \rho > 0$ and let
		\[
		A = \{w_R : w \in A_0 \},\quad
		X = \{w_t : w \in A_0,\, 0 \leq t \leq R \}.
		\]
		Assume that
		\begin{equation*}
			\sup_{w \in A} E(w) \leq 0 < \inf_{w \in \mathcal{M}_\rho} E(w), \qquad
			\sup_{w \in X} E(w) < c^*.
		\end{equation*}
		Then $E$ has $m$ distinct pairs of  critical points $\pm w_j$, $j=1,\cdots,m$ such that
		$$
		\inf_{w\in \mathcal{M}_\rho}E(w) \leq E(w_j) \leq \sup_{w\in X}E(w)
		$$
		for each $j$.
	\end{corollary}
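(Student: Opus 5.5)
This corollary is the special case $k=1$ of Theorem \ref{Theorem 2.5 MP}, so the plan is to verify the hypotheses of that theorem with $k=1$ and $B_0=\mathcal{M}$. First, $B_0=\mathcal{M}$ is symmetric and closed in $W$. Symmetry holds because $I$ is even, so $I(-w)=I(w)=1$. Closedness holds because $I$ is continuous on $W$ and $\mathcal{M}=I^{-1}(1)$; note $0\notin I^{-1}(1)$ since $I(0)=0$. Next, $\mathcal{M}\setminus B_0=\emptyset$, so by Definition \ref{Definition 301} we have $i(\mathcal{M}\setminus B_0)=0=k-1$, which is the required index bound.

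For the set $A_0$, the corollary assumes $A_0\subset\mathcal{M}$ is compact and symmetric with $i(A_0)=m=k+m-1$ when $k=1$. This gives the lower bound $i(A_0)\ge k+m-1$.

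It remains to match the sets. With $B_0=\mathcal{M}$ we get
$$B=\{w_\rho:w\in\mathcal{M}\}=\mathcal{M}_\rho,$$
which is exactly the description of $\mathcal{M}_\rho$ given earlier. The sets $X$ and $A$ are defined identically in both statements. Hence the assumed inequalities $\sup_A E\le 0<\inf_{\mathcal{M}_\rho}E$ and $\sup_X E<c^*$ are precisely the hypotheses of Theorem \ref{Theorem 2.5 MP}. The standing assumptions that $E$ is even, of class $C^1$, and satisfies $(PS)_c$ for $c\in(0,c^*)$ carry over unchanged.

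Applying Theorem \ref{Theorem 2.5 MP} then gives critical values
$$\inf_{\mathcal{M}_\rho}E\le c_1^*\le\cdots\le c_m^*\le\sup_X E,$$
together with $m$ distinct pairs of associated critical points $\pm w_j$. Each satisfies $E(w_j)=c_j^*$, which lies in the stated interval. Every step is a direct check against earlier definitions, so I do not expect any real obstacle.
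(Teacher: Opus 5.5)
Your proposal is correct and follows the paper's approach exactly. The paper derives the corollary from Theorem \ref{Theorem 2.5 MP} by taking $k=1$ and $B_0=\mathcal{M}$, which gives $i(\mathcal{M}\setminus B_0)=i(\emptyset)=0$ and $B=\mathcal{M}_\rho$. Your checks of closedness, symmetry and the index conditions fill in the routine details that the paper leaves implicit.
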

	
	To prove the existence of solutions to the system \eqref{Sys} when $\gamma<1$, more specifically, to prove Theorem \ref{Th3 <1}, we will use the following classical result from critical point theory (see, e.g., \cite[Proposition 3.36]{Pe-Ag-OR-book}).
	
	\begin{proposition}\label{critical point result}
		Let $E$ be an even $C^1$-functional on a Banach space $W$ such that $E(0) = 0$
		and $E$ satisfies the $(PS)_c$ condition for all $c<0$. Let $\mathcal{F}$ denote the class of symmetric subsets
		of $W \backslash\{0\}$. For $k\geq1$, let
		$$
		\mathcal{F}_k = \{M \in \mathcal{F}: i(M) \geq k\}
		$$
		and set
		\begin{equation}\label{ck gamma<1}
			c_k:=\inf_{M\in \mathcal{F}_k}\sup_{u\in M}E(u).
		\end{equation}
		If there exists a $k_0 \geq1$ such that $-\infty<c_k <0$ for all $k \geq k_0$, then $c_{k_0}\leq c_{k_0+1}\leq\cdots  \to0$ is a sequence of critical values of $E$.
	\end{proposition}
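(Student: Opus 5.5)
The plan is to run the standard Lusternik--Schnirelmann argument using the cohomological index $i$ of Definition~\ref{Definition 301}. It needs four properties of $i$:
\begin{enumerate}[label=(\roman*)]
\item monotonicity under odd continuous maps, so that $i(A)\le i(\eta(A))$ for every odd continuous $\eta$;
\item subadditivity, $i(A\cup B)\le i(A)+i(B)$;
\item continuity: every compact symmetric $K\subset W\setminus\{0\}$ has $i(K)<\infty$ and a closed symmetric neighborhood $N$ with $i(N)=i(K)$;
\item the usual odd deformation lemma, which applies at any level $c<0$ because $(PS)_c$ holds there.
\end{enumerate}

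\textbf{Step 1 (monotonicity).} Since $\mathcal{F}_{k+1}\subset\mathcal{F}_k$, the sequence $c_k$ is nondecreasing. By hypothesis $c_k\in(-\infty,0)$ for $k\ge k_0$.

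\textbf{Step 2 (multiplicity at a level).} Fix $c<0$ with $c=c_k=\cdots=c_{k+m-1}$ for some $k\ge k_0$, $m\ge1$. Let $K_c$ be the set of critical points of $E$ at level $c$. By $(PS)_c$, $K_c$ is compact, and it is symmetric because $E$ is even. Also $0\notin K_c$, since $E(0)=0\neq c$.

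We claim $i(K_c)\ge m$. Suppose not, and take $N$ as in (iii), so $i(N)=i(K_c)\le m-1$. The deformation lemma gives $\varepsilon>0$ with $c+\varepsilon<0$ and an odd homeomorphism $\eta$ such that
\[
\eta\big(E^{c+\varepsilon}\setminus N\big)\subset E^{c-\varepsilon}.
\]
Choose $M\in\mathcal{F}_{k+m-1}$ with $\sup_M E\le c+\varepsilon$. By (ii),
\[
i(\overline{M\setminus N})\ge i(M)-i(N)\ge k .
\]
The set $\eta(\overline{M\setminus N})$ is symmetric. It avoids $0$ because it lies in $E^{c-\varepsilon}$ and $c-\varepsilon<0=E(0)$. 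By (i) it therefore belongs to $\mathcal{F}_k$, and $\sup E$ over it is at most $c-\varepsilon<c_k$. This contradicts the definition of $c_k$.

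Taking $m=1$ shows that each $c_k$ with $k\ge k_0$ is a critical value. Together with Step 1, this gives the increasing sequence of critical values.

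\textbf{Step 3 ($c_k\to0$).} This is the main point. Suppose instead that $c_k\nearrow c<0$. Then $c$ is a critical value, because a $(PS)$-sequence built from near-critical points at the levels $c_k$ converges by $(PS)_c$. So $K_c$ is again compact and symmetric with $0\notin K_c$, and (iii) gives a closed symmetric neighborhood $N$ with $i(N)=i(K_c)=:n<\infty$.

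Take $\varepsilon>0$ and an odd deformation $\eta$ with $\eta(E^{c+\varepsilon}\setminus N)\subset E^{c-\varepsilon}$. Then choose $k\ge k_0$ so large that $c_k>c-\varepsilon$. Since $c_{k+n}\le c$, there is $M\in\mathcal{F}_{k+n}$ with $\sup_M E\le c+\varepsilon$. Arguing exactly as in Step 2, $\eta(\overline{M\setminus N})\in\mathcal{F}_k$ and its supremum of $E$ is at most $c-\varepsilon<c_k$, a contradiction. Hence $c_k\to0$.

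\textbf{Main obstacle.} The delicate point is Step 3. One must make sure the deformation never crosses level $0$, so that the deformed sets stay in $W\setminus\{0\}$; this is why only negative levels are used. One also needs finiteness of $i(K_c)$, which comes from compactness of $K_c$ via $(PS)_c$ and the continuity property (iii) of the Fadell--Rabinowitz index.
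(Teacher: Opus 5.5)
Your argument is correct. The paper gives no proof of this proposition; it quotes it from Perera--Agarwal--O'Regan (Proposition 3.36), and your Steps 1--3 are the standard index/deformation proof of that result. One detail should be tidied: $\overline{M\setminus N}$ may meet $\partial N$, so apply the deformation lemma to the open neighborhood $\operatorname{int}N$ of $K_c$, which is legitimate because $\overline{M\setminus N}\subset E^{c+\varepsilon}\setminus\operatorname{int}N$.
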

	
	\subsection{Nonlinear eigenvalue problem}
	
	Now, for a function $\widetilde{H}$ that satisfies \ref{H basic},\ref{H subcritic} and \ref{H scaling} with $\gamma=1$, let us consider the nonlinear eigenvalue problem
	\begin{equation}\label{NE}
		\left\{
		\begin{aligned}
			-\Delta_p u &= \lambda \widetilde{H}_u(u,v), & \text{in } \Omega\\
			-\Delta_q v &= \lambda \widetilde{H}_v(u,v), &\text{in } \Omega\\
			u=v&=0,&\text{on } \partial\Omega.
		\end{aligned}
		\right.
		\tag{NE}
	\end{equation}
	Recalling \eqref{M def}, let
	$$
	\mathcal{M}^+:=\left\{w=(u,v)\in \mathcal{M}: \int_\Omega \widetilde{H}(u,v)\,dx>0\right\}.
	$$
	Condition \ref{H basic} ensures that $\mathcal{M^+}\neq\emptyset$.
	\bigskip
	Now we define $\Psi:\mathcal{M}^+\to\mathbb{R}$ by
	$$
	\Psi(u,v)=\frac{1}{\int_\Omega \widetilde{H}(u,v)\, dx}.
	$$ 
	Let $\mathcal{F}$ denote the class of symmetric subsets of $\mathcal{M}^+$ and let $i(Y)$ denote the cohomological index of $Y \in\mathcal F$ (see \cite{Fadell-Rabinowitz-1978}). For $k\in\mathbb{N}$, we consider
	$$
	\mathcal{F}_k = \{Y \in \mathcal{F}:  i(Y) \geq k\}
	$$
	and set
	$$
	\lambda_k := \inf_{Y\in\mathcal{F}_k}\sup_{u\in Y}\Psi(u).
	$$
	The next theorem is an immediate consequence of an abstract result proved in \cite{Pe-Ag-OR-book}. It ensures that $(\lambda_k)$ is indeed a sequence of \emph{nonlinear eigenvalues} for \eqref{NE} and provides some additional information as stated below.
	
	\begin{theorem}[\cite{Pe-Ag-OR-book}, Theorem 10.10]\label{lambdak}
		Assume that $1<p, q<N$ and $\widetilde{H}$ satisfies \ref{H basic}, \ref{H subcritic} and \ref{H scaling} with $\gamma=1$. Then $\{\lambda_k\}_{k\in \mathbb{N}}$ is a nondecreasing sequence of positive eigenvalues of \eqref{NE} and $\lambda_k\to \infty$. Moreover,
		\begin{enumerate}[label={(\roman*)},  
			ref=(\roman*)]               
			\item\label{lambdak(i)} The first eigenvalue is  given by
			$$
			\lambda_1=\min_{w\in\mathcal{M}^+}{\Psi}(w)>0.
			$$
			\item\label{lambdak(ii)}  If $\lambda_k = \cdots = \lambda_{k+m-1} = \lambda$, then $i(E_\lambda)\geq m$, where $E_\lambda$ is the set of eigenfunctions associated with $\lambda$ that lie on $\mathcal{M}^+$. 
			\item\label{lambdak(iii)} If $\lambda_k<\lambda<\lambda_{k+1}$, then
			$$i({\Psi}^{\lambda_k} ) = i(\mathcal{M}^+\backslash{\Psi}_{\lambda}) = i({\Psi}^{\lambda} ) 
			= i(\mathcal{M}^+\backslash {\Psi}_{\lambda_{k+1}}) = k,$$
			where 
			${\Psi}^a=\{ w\in \mathcal{M}^+ : {\Psi}(w) \leq a\}$ and 
			${\Psi}_a = \{ w\in \mathcal{M}^+ : {\Psi}(w) \geq a\}$ for $a\in\mathbb{R}$. 
		\end{enumerate}
	\end{theorem}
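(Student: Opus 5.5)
The plan is to obtain Theorem \ref{lambdak} from the abstract eigenvalue result of \cite{Pe-Ag-OR-book} for problems of the form $A_p w = \lambda B_p w$. Here $A_p$ and $B_p$ are operators on a reflexive Banach space that are homogeneous of the same degree with respect to a scaling. We take $W=W^{1,p}_0(\Omega)\times W^{1,q}_0(\Omega)$ with the scaling $w_t=(t^{1/p}u,t^{1/q}v)$ from \eqref{scaling def}. We take $I$ as in \eqref{I defi} and $J(w)=\int_\Omega \widetilde H(u,v)\,dx$. By \ref{H scaling} with $\gamma=1$, $I(w_t)=tI(w)$ and $J(w_t)=tJ(w)$. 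Hence $\Psi=1/J$ restricted to $\mathcal{M}$ is the scale-invariant Rayleigh quotient $I/J$. Critical points of $\Psi$ on $\mathcal{M}^+$ correspond to solutions of $I'(w)=\lambda J'(w)$, that is, to weak solutions of \eqref{NE}. Since $\widetilde H$ is even, $\Psi$ is even and the cohomological index applies.

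The key steps, in order, are as follows.

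\textbf{Step 1.} Show that $I\in C^1$ is even and coercive, and that $I'$ is of type $(S_+)$. This is standard for the $p$- and $q$-Laplacians, componentwise.

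\textbf{Step 2.} Show that $J\in C^1$ and that $J'$ is compact. This follows from \ref{H subcritic} together with the bound \eqref{H MH}: the exponents $p_1<p^*$ and $q_2<q^*$ are subcritical, and the cross exponents $q_1<1+q^*(p^*-1)/p^*$ and $p_2<1+p^*(q^*-1)/q^*$ are exactly what is needed so that $|v|^{q_1-1}\in L^{(p^*)'+\epsilon}$ and $|u|^{p_2-1}\in L^{(q^*)'+\epsilon}$. Rellich--Kondrachov compactness then gives complete continuity of $w\mapsto \widetilde H_u(u,v)$ into $W^{-1,p'}$, and likewise for $\widetilde H_v$.

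\textbf{Step 3.} Show that $\mathcal{M}$ is a $C^1$ Finsler manifold, radially homeomorphic to the unit sphere, with $\pi$ continuous. Then show that $\mathcal{M}^+$ is a nonempty open symmetric subset. It is nonempty by \ref{H basic}: pick $u,v$ with $uv\neq0$ on a set of positive measure and project radially onto $\mathcal{M}$.

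\textbf{Step 4.} Verify the Palais--Smale condition for $\Psi$ on $\mathcal{M}^+$ at every level. This is where $(S_+)$ and compactness of $J'$ are used. Because $\mathcal{M}$ is bounded, a $(PS)$ sequence has a weakly convergent subsequence. The bound $\Psi(w_n)\le c$ forces $J(w_n)\ge 1/c$, so the weak limit stays in $\mathcal{M}^+$, and $(S_+)$ upgrades weak to strong convergence.

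Once these steps are in place, the cited abstract theorem gives the remaining conclusions. The $\lambda_k$ are critical values. They are nondecreasing because $\mathcal{F}_{k+1}\subset\mathcal{F}_k$. Items (ii) and (iii) are the standard consequences of the minimax characterization via the index, using the deformation lemma and the monotonicity and continuity properties of $i$. For $\lambda_1>0$ and (i): $J$ is weakly continuous and bounded on the bounded set $\mathcal{M}$, so $\Psi\ge 1/\sup_{\mathcal M}J>0$, and the infimum is attained by weak compactness.

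The divergence $\lambda_k\to\infty$ is the step I expect to be the main obstacle. My first attempt is the standard argument: if $\lambda_k\le\lambda^*$ for all $k$, then the sublevel set $\Psi^{\lambda^*}$ has infinite index. By the $(PS)$ condition, the critical set at the level $\lim\lambda_k$ is compact, so it has finite index, and a deformation argument then gives a contradiction. Alternatively, I would approximate $W$ by finite-dimensional subspaces and use compactness of $J$: on the complement of a large finite-dimensional space, $\sup J$ over $\mathcal{M}$ is small, and symmetric sets of large index must meet that complement. The delicate point throughout is the anisotropic scaling rather than linear homogeneity, which is precisely the setting of \cite[Theorem 10.10]{Pe-Ag-OR-book}. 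So the main work is checking its hypotheses, and Step 2 with the cross-exponent bookkeeping is the only genuinely problem-specific verification.
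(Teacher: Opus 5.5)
Your proposal is correct and follows the same route as the paper. The paper records this result as an immediate consequence of \cite[Theorem 10.10]{Pe-Ag-OR-book} with no further argument. You invoke the same abstract theorem and also check the hypotheses the paper leaves implicit: $(S_+)$ for $I'$, compactness of $J'$ through the cross exponents in \ref{H subcritic}, $\mathcal{M}^+\neq\emptyset$ from \ref{H basic}, and the Palais--Smale condition for $\Psi$. The one step worth writing out is in Step 4. Testing with the scaling generator $(u_n/p,v_n/q)$ gives $I'(w_n)-\Psi(w_n)J'(w_n)\to0$ in $W^*$. That identity, together with compactness of $J'$, is what lets $(S_+)$ apply and puts the strong limit on $\mathcal{M}^+$.
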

	
	\section{A compactness condition}\label{Sec CC}
	
	Before showing that the functional $E$ associated with problem \eqref{Sys} satisfies the $(PS)_c$ condition for $c$ bellow a suitable threshold level, let us prove this basic result for $H$.  
	
	\begin{lemma} \label{Lemma H}
		Assume that $H$ satisfies \ref{H basic} and \ref{H subcritic}.  
		\begin{enumerate}
			[label={(\roman*)},  
			ref=(\roman*)]               
			\item \label{Lemma H p star} 
			If \ref{H ep Cep} holds, then given $\varepsilon > 0$, there exists $C=C(\varepsilon,p,\tilde{p},\bar{p},q,\tilde{q},\bar{q},N,\Omega) > 0$ such that:
			\[
			\int_\Omega H(u,v)\,dx \leq \varepsilon\left(1+\int_\Omega|u|^{p^{\ast}}\,dx\right)+ C \left(\int_\Omega|v|^{q^{\ast}}\,dx\right)^{\tilde{q}/q^\ast}+C \left(\int_\Omega|v|^{q^{\ast}}\,dx\right)^{\bar{q}/q^\ast},\quad\forall (u, v) \in W.
			\]
			\item \label{Lemma H < I(w)} 
			If \ref{H scaling} holds, then there exists $\overline{C}=\overline{C}(\Omega,p,{p_1},p_2,q,q_1,q_2,N) > 0$ such that:
			\[
			\int_\Omega H(u,v)\,dx \leq \overline{C}\, [I(u,v)]^\gamma,\quad\forall (u, v) \in W.
			\]
			\item \label{Lemma H grad}
			If \ref{H_v ep Cep} holds, then given $\varepsilon > 0$, there exists $C=C(\varepsilon,p,q,N,\Omega) > 0$ such that:
			\[
			\int_\Omega \left[r H(u,v)-H_v(u,v)v\right]\,dx \leq \varepsilon \int_\Omega|\nabla u|^p\,dx + \varepsilon \int_\Omega|v|^{q^{\ast}}\,dx+C,\quad\forall (u, v) \in W.
			\]
			
		\end{enumerate}
	\end{lemma}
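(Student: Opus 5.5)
The three parts are independent. Each one follows by integrating a pointwise bound on $H$ and then using Hölder, Young, and Sobolev inequalities. I treat them in order.

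For \ref{Lemma H p star}, integrate \ref{H ep Cep} over $\Omega$. Since $\tilde p,\bar p\le p^\ast$, Young's inequality gives $|u|^{\tilde p}\le 1+|u|^{p^\ast}$, and similarly for $\bar p$. Hence
\[
\varepsilon\int_\Omega(|u|^{\tilde p}+|u|^{\bar p})\,dx\le 2\varepsilon\left(|\Omega|+\int_\Omega|u|^{p^\ast}dx\right),
\]
and after renaming $\varepsilon$ this becomes $\varepsilon(1+\int_\Omega|u|^{p^\ast})$. For the $v$-terms, apply Hölder with exponent $q^\ast/\tilde q\ge1$:
\[
\int_\Omega|v|^{\tilde q}\,dx\le|\Omega|^{1-\tilde q/q^\ast}\left(\int_\Omega|v|^{q^\ast}dx\right)^{\tilde q/q^\ast},
\]
and likewise for $\bar q$. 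Taking $C=C_\varepsilon\max\{|\Omega|^{1-\tilde q/q^\ast},|\Omega|^{1-\bar q/q^\ast}\}$ gives the claim.

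For \ref{Lemma H < I(w)}, I use the scaling \ref{H scaling}. If $w=0$ the estimate is trivial, since $H(0,0)=0$ by the scaling with $t=0$. Otherwise put $t=I(w)$ and write $w=(\pi(w))_t$, where $\pi(w)\in\mathcal M$. Then \ref{H scaling} gives
\[
\int_\Omega H(w)\,dx=I(w)^\gamma\int_\Omega H(\pi(w))\,dx .
\]
It therefore suffices to bound $\int_\Omega H$ uniformly on $\mathcal M$. On $\mathcal M$ we have $\|\nabla u\|_p^p\le p$ and $\|\nabla v\|_q^q\le q$. By \eqref{H MH} (a consequence of \ref{H subcritic}) together with the Sobolev embeddings $W_0^{1,p}\hookrightarrow L^{p_0}$ and $W_0^{1,q}\hookrightarrow L^{q_0}$, valid since $p_0<p^\ast$ and $q_0<q^\ast$, the integral $\int_\Omega M_H(1+|u|^{p_0}+|v|^{q_0})$ is bounded by a constant $\overline C$. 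That constant depends only on $\Omega,p,q,N$ and the exponents, which is the claimed dependence.

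For \ref{Lemma H grad}, integrate \ref{H_v ep Cep}:
\[
\int_\Omega[rH-H_vv]\,dx\le\varepsilon\int_\Omega|u|^p\,dx+\varepsilon\int_\Omega|v|^{q^\ast}dx+C_\varepsilon|\Omega|.
\]
Poincaré's inequality $\int|u|^p\le\lambda_{1,p}^{-1}\int|\nabla u|^p$ then bounds the first term. Replacing $\varepsilon$ by $\varepsilon\min\{1,\lambda_{1,p}\}$ at the start yields the stated form with $C=C_\varepsilon|\Omega|$.

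None of these steps is a real obstacle. The only point needing care is \ref{Lemma H < I(w)}: it requires the observation that the scaling reduces the estimate to the bounded set $\mathcal M$, and that $H(0,0)=0$ handles $w=0$.
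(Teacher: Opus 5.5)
Your proposal is correct and follows essentially the paper's route: for (i) and (iii) you integrate the pointwise hypotheses \ref{H ep Cep} and \ref{H_v ep Cep} and apply H\"older's and Poincar\'e's inequalities, and for (ii) you use the scaling \ref{H scaling} to reduce to a uniform bound for $\int_\Omega H$ on the bounded set $\mathcal{M}$. The differences are only cosmetic. In (i) you absorb the $u$-terms pointwise via $|u|^{\tilde p}\le 1+|u|^{p^\ast}$ before integrating, whereas the paper applies H\"older first. In (ii) you invoke \eqref{H MH} directly on $\mathcal{M}$, whereas the paper re-derives a pointwise bound from \ref{H subcritic} through the mean value theorem.
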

	
	\begin{proof}
		\noindent Proof of \ref{Lemma H p star}: Given $\varepsilon>0$, using \ref{H ep Cep} we get 
		\begin{align*}
			|H(s,t)|\le \frac{\varepsilon}{2(|\Omega|^{1-\frac{\tilde p}{p^\ast}}+|\Omega|^{1-\frac{\bar p}{p^\ast}})}(|s|^{\tilde{p}}+|s|^{\bar{p}})+ \widetilde{C}(|t|^{\tilde q}+|t|^{\bar q})
			,\quad\forall s,t\in\mathbb{R},
		\end{align*}
		where $\widetilde{C}=\widetilde{C}(\varepsilon,|\Omega|,p,\tilde{p},\bar{p},N)$. Using the H\"older's inequality we obtain
		\begin{align*}
			\int_\Omega |H(u,v)|\,dx &\leq \frac{\varepsilon}{2} \left(\int_\Omega|u|^{p^\ast}\,dx \right)^{\tilde{p}/p^\ast}+\frac{\varepsilon}{2} \left(\int_\Omega|u|^{p^\ast}\,dx \right)^{\bar{p}/p^\ast}\\
			&\quad+{C}\left(\int_\Omega|v|^{q^{\ast}}\,dx\right)^{\tilde{q}/q^\ast}+{C}\left(\int_\Omega|v|^{q^{\ast}}\,dx\right)^{\bar{q}/q^\ast},
		\end{align*}
		for all $ (u, v) \in W$, with ${C}={C}(\varepsilon,|\Omega|,p,\tilde{p},\bar{p},q,\tilde{q},\bar{q},N)$. This implies that \ref{Lemma H p star} holds. \\
		
		\noindent Proof of \ref{Lemma H < I(w)}: From \ref{H scaling} we have $H(0,0)=0$. So, using \ref{H subcritic}, there is $C>0$ such that for each $(u,v)\in\mathbb{R}^2$ we get
		\begin{align*}
			|H(u,v)|&=|\nabla H(\theta u,\theta v)\cdot(u,v)|
			\leq C\left(|u|^{p_1}+|u|^{p^*}+|v|^\frac{(q_1-1)p^*}{p^*-1}+|u|^\frac{(p_2-1)q^*}{q^*-1}+|v|^{q^*}+|v|^{q_2}\right)
		\end{align*}
		with $\theta=\theta(u,v)\in(0,1)$. Then, since $W^{1,p}_0(\Omega)\hookrightarrow L^{s}(\Omega)$ for $s\in[1,p^*]$, recalling the definition of $I$ in \eqref{I defi} we obtain
		\begin{align*}
			\int_\Omega |H(u,v)|\,dx
			&\leq \tilde{C}\sum_{j=1}^3\left[\left(\int_\Omega|\nabla u|^p\,dx\right)^{a_j}+\left(\int_\Omega|\nabla v|^q\,dx\right)^{b_j}\right]\\
			&\leq \tilde{C}\sum_{j=1}^3\left[\left(pI(u,v)\right)^{a_j}+\left(qI(u,v)\right)^{b_j}\right],
		\end{align*}
		for all $(u,v)\in W$, for some $a_j,\,b_j>0$, $j=1,2,3$. Then, given $w=(u,v)\in W\setminus\{0\}$ and $w_t=(t^\frac{1}{p}u,t^\frac{1}{q}v)$ (see \eqref{scaling def}) we take $t_w=[I(w)]^{-1}$ so that $I(w_{t_w})=1$.  From \ref{H scaling} we get
		\begin{align*}
			t_w^\gamma \int_\Omega |H(w)|\,dx&=\int_\Omega |H(w_{t_w})|\,dx
			\leq \tilde{C}\sum_{j=1}^3\left[pI(w_{t_w})\right]^{a_j}+\tilde{C}\sum_{j=1}^3\left[qI(w_{t_w})\right]^{b_j}\leq \overline{C},
		\end{align*}
		for a constant $\overline{C}=\overline{C}(\Omega,p,{p_1},p_2,q,q_1,q_2,N)>0$. By the choice of $t_w$ we have the desired result.\\
		
		\noindent Proof of \ref{Lemma H grad}: Similarly, due to Poincar\'e's inequality, there exists $\nu_1=\nu_1(p,N,\Omega)>0$ such that
		\[
		\int_\Omega|u|^p\,dx \le \nu_1\int_\Omega|\nabla u|^p\,dx ,\quad\forall u\in W^{1,p}_0(\Omega).
		\]
		Given $\varepsilon>0$, using \ref{H_v ep Cep}  we get
		\[
		rH(s,t)-H_v(s,t)t\le (\varepsilon/\nu_1)|s|^p+ \varepsilon|t|^{q^\ast}+\widetilde{C}(\varepsilon,\nu_1),\quad\forall s,t\in\mathbb{R}.
		\]
		Thus, 
		\[
		\int_\Omega \left[rH(u,v)-H_v(u,v)v\right]\,dx \leq \varepsilon \int_\Omega|\nabla u|^p\,dx +\varepsilon\int_\Omega|v|^{q^{\ast}}\,dx +\widetilde{C}|\Omega|,\quad\forall (u, v) \in W
		\]
		which proves \ref{Lemma H grad}. 
	\end{proof}
	
	Let $S_q$ denote the best constant for the Sobolev embedding of $W^{1,q}_0(\Omega)$ in $L^{q^\ast}(\Omega)$, i.e.,
	\begin{equation}\label{S def}
		S_q=\inf_{u\in W^{1,q}_0(\Omega) \backslash\{0\}}\frac{\int_\Omega|\nabla v|^q\,dx }{\left(\int_\Omega|v|^{q^{\ast}}\,dx\right)^{q/q^{\ast}}}.
	\end{equation}
	
	\begin{lemma}\label{Lemma PS}
		Assume that $H$ satisfies \ref{H basic}, \ref{H subcritic}, \ref{H_v ep Cep} 
		and \ref{AR}.
		Let $\tilde\lambda, \mu>0$ and let $\lambda\in(0,\tilde\lambda]$.
		Then there exists a positive constant $d_{\tilde\lambda}$ such that the functional $E$ satisfies the  $(PS)_c$ condition for all $c < \frac{1}{N} S_q^{N/q}-\mu d_{\tilde\lambda}$.
	\end{lemma}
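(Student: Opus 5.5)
We follow the standard scheme for critical problems: show that every $(PS)_c$ sequence is bounded, pass to a weak limit, and use the concentration–compactness principle of P.-L. Lions to show that no concentration can occur in the $v$-component below the threshold. Let $(w_n)=(u_n,v_n)\subset W$ satisfy $E(w_n)\to c$ and $E'(w_n)\to0$.

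\emph{Boundedness.} We compute $E(w_n)-\frac1r E'(w_n)(0,v_n)$, with $r\in(q,q^\ast)$ the exponent from \ref{H_v ep Cep}. This gives
\[
\frac1p\|\nabla u_n\|_p^p+\Big(\frac1q-\frac1r\Big)\|\nabla v_n\|_q^q+\frac{\mu}{p^\ast}\|u_n\|_{p^\ast}^{p^\ast}+\Big(\frac1r-\frac1{q^\ast}\Big)\|v_n\|_{q^\ast}^{q^\ast}-\frac{\lambda}{r}\int_\Omega\big(rH-H_vv_n\big)\,dx .
\]
By Lemma \ref{Lemma H}\ref{Lemma H grad} with $\varepsilon$ small (depending on $\tilde\lambda$), the last term is absorbed. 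This yields
\[
c+1+o(1)\|v_n\|\ge a\big(\|\nabla u_n\|_p^p+\|\nabla v_n\|_q^q+\|v_n\|_{q^\ast}^{q^\ast}\big)-C_{\tilde\lambda},
\]
so $(w_n)$ is bounded in $W$.

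\emph{Passing to the limit.} Along a subsequence, $u_n\rightharpoonup u$ and $v_n\rightharpoonup v$, with a.e. convergence and strong convergence in subcritical Lebesgue spaces. By \ref{H subcritic}, whose exponents $q_1,p_2$ are chosen precisely so that $H_u(u_n,v_n)$ and $H_v(u_n,v_n)$ converge in the duals $L^{(p^\ast)'}$ and $L^{(q^\ast)'}$, the coupling terms pass to the limit. Since $|u_n|^{p^\ast-2}u_n$ is bounded in $L^{(p^\ast)'}$ and converges a.e., it converges weakly. Hence $w=(u,v)$ is a critical point of $E$.

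\emph{Compactness in the first component.} Test $E'(w_n)-E'(w)$ with $(u_n-u,0)$. The $\mu$-term is monotone, since $(|a|^{p^\ast-2}a-|b|^{p^\ast-2}b)(a-b)\ge0$, so it has a favorable sign. Together with the $(S_+)$ property of $-\Delta_p$, this gives $u_n\to u$ strongly. Thus the negative critical term poses no obstruction in $u$.

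\emph{Compactness in the second component.} Apply Lions' lemma: $|\nabla v_n|^q\rightharpoonup\mu_0\ge|\nabla v|^q+\sum_j\mu_j\delta_{x_j}$ and $|v_n|^{q^\ast}\rightharpoonup|v|^{q^\ast}+\sum_j\nu_j\delta_{x_j}$, with $S_q\nu_j^{q/q^\ast}\le\mu_j$. Testing $E'(w_n)(0,\phi_\varepsilon v_n)$ with cutoffs $\phi_\varepsilon$ concentrating at $x_j$ gives $\mu_j\le\nu_j$, hence either $\nu_j=0$ or $\nu_j\ge S_q^{N/q}$. Suppose some $\nu_j\ge S_q^{N/q}$. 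Writing $c=\lim\big[E(w_n)-\frac1p E'(w_n)(u_n,0)-\frac1q E'(w_n)(0,v_n)\big]$ gives
\[
c=\lim\Big[\lambda\!\int\!\Big(\frac1pH_uu_n+\frac1qH_vv_n-H\Big)+\mu\Big(\frac1{p^\ast}-\frac1p\Big)\|u_n\|_{p^\ast}^{p^\ast}+\frac1N\|v_n\|_{q^\ast}^{q^\ast}\Big].
\]
By \ref{AR}, the first term is nonnegative. The last term is at least $\frac1N S_q^{N/q}$. The middle term equals $-\frac{\mu}{N}\|u\|_{p^\ast}^{p^\ast}$, using the strong convergence of $u_n$. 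It remains to bound $\|u\|_{p^\ast}^{p^\ast}\le N d_{\tilde\lambda}$ uniformly. This follows because $\|\nabla u_n\|_p^p$ is bounded by a constant depending only on $\tilde\lambda$ when $c$ ranges below $\frac1N S_q^{N/q}$; the boundedness estimate above provides such a bound independent of $\mu$ and $\lambda\le\tilde\lambda$, and Sobolev's inequality then converts it into a bound on $\|u\|_{p^\ast}$. Hence $c\ge\frac1NS_q^{N/q}-\mu d_{\tilde\lambda}$, a contradiction. Therefore all $\nu_j=0$, so $v_n\to v$ in $L^{q^\ast}$, and the $(S_+)$ property of $-\Delta_q$ gives $v_n\to v$ in $W^{1,q}_0(\Omega)$.

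\emph{Main obstacle.} The delicate point is the last step: producing a bound $d_{\tilde\lambda}$ on $\|u_n\|_{p^\ast}^{p^\ast}$ that is uniform in $\mu$ and $\lambda\le\tilde\lambda$. This requires tracking the constants in the boundedness step carefully, including the possible growth of the $o(1)\|v_n\|$ term; one may need to cap $c$ by $\frac1NS_q^{N/q}$ to make the bound uniform. One also needs concentration at $x_j$ in the $H_v$ term to vanish, which follows from the subcritical growth in \ref{H subcritic}.
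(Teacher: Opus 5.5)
Your proof is correct, and its skeleton matches the paper's. The shared steps are: boundedness from $E(w_n)-\frac1r E'(w_n)(0,v_n)$ and Lemma~\ref{Lemma H}\ref{Lemma H grad}; strong convergence of $u_n$ thanks to the favorable sign of the $\mu$-term; a dichotomy at level $S_q^{N/q}$ for the $v$-component; and an energy identity combining \ref{AR} with a $\mu$-independent bound on $\int_\Omega|u|^{p^\ast}dx$.

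Where you differ is in the compactness tools.

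\textbf{The paper's route.} It works with the remainders $\widetilde u_n=u_n-u$ and $\widetilde v_n=v_n-v$.
\begin{enumerate}
\item It takes a.e. convergence of the gradients from Boccardo--Murat.
\item It uses Brezis--Lieb to get $\int_\Omega|\nabla\widetilde u_n|^p+\mu\int_\Omega|\widetilde u_n|^{p^\ast}=o(1)$ and $\int_\Omega|\nabla\widetilde v_n|^q=\int_\Omega|\widetilde v_n|^{q^\ast}+o(1)$.
\item It reads off the alternative $\|\nabla\widetilde v_n\|_q^q\to0$ or $\|\nabla\widetilde v_n\|_q^q\ge S_q^{N/q}+o(1)$ directly from the Sobolev inequality.
\end{enumerate}
Its energy identity uses the weights $1/p^\ast,1/q^\ast$ and subtracts $\frac1N E'(w)w$, so \ref{AR} is applied to the weak limit.

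\textbf{Your route.} You get $u_n\to u$ from the monotonicity of $t\mapsto|t|^{p^\ast-2}t$ plus the $(S_+)$ property of $-\Delta_p$. You localize the possible loss of compactness of $v_n$ at atoms via Lions' concentration--compactness lemma. You apply \ref{AR} along the sequence with the weights $1/p,1/q$.

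\textbf{Comparison.} Your route never needs a.e. convergence of gradients, since the final $(S_+)$ step upgrades $L^{q^\ast}$-convergence to $W^{1,q}_0$-convergence. The cost is the cutoff computation at each atom: the gradient cross term and the $H_v$ term must be shown to vanish there, which your subcriticality remark covers. The paper's route is shorter once the Boccardo--Murat convergence is granted.

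Finally, the ``main obstacle'' you flag is not a real difficulty. Once $(w_n)$ is bounded, $o(1)\|v_n\|\to0$, so $\limsup_n\|\nabla u_n\|_p^p$ is controlled by $c+C_{\tilde\lambda}$ alone. Since $c<\frac1N S_q^{N/q}$ holds automatically in the range of the lemma, this gives a $d_{\tilde\lambda}$ independent of $\mu$ and of $\lambda\le\tilde\lambda$, exactly as in \eqref{unif bdd}.
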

	\begin{proof}
		Let $\{w_n\}_n $ be a $(PS)_c$ sequence for $E$ in $W$. We first prove that $w_n= (u_n, v_n)$  is bounded. 
		Since $E(w_n)=c+o(1)$, we have
		\begin{align}\label{E(wn)}
			\frac{1}{p} \int_\Omega |\nabla u_n|^p\, dx+\frac{1}{q} \int_\Omega|\nabla v_n|^q dx -\lambda\int_\Omega H(u_n,v_n)\,dx-\frac{1}{q^{\ast}}\int_\Omega|v_n|^{q^{\ast}}\,dx \nonumber\\
			=-\frac{\mu}{p^{\ast}}\int_\Omega|u_n|^{p^{\ast}}\,dx +c+o(1).
		\end{align}
		Recalling that $\mu > 0$, 
		we get
		\begin{align}\label{E(wn) no mu}
			\frac{1}{p} \int_\Omega |\nabla u_n|^p\, dx+\frac{1}{q} \int_\Omega|\nabla v_n|^q dx  -\lambda\int_\Omega H(u_n,v_n)\,dx-\frac{1}{q^{\ast}}\int_\Omega|v_n|^{q^{\ast}}\,dx \leq c+o(1).
		\end{align}
		Also, once $E'(u_n,v_n)(\varphi,\psi)=o(1)\|(\varphi,\psi)\|$ for any $(\varphi,\psi)\in W$, 
		by \eqref{E'} we have
		\begin{align}
			\int_\Omega\left(|\nabla u_n|^{p-2}\nabla u_n\nabla \varphi+|\nabla v_n|^{q-2}\nabla v_n\nabla \psi\right)\,dx -\lambda\int_\Omega \left(H_s(u_n,v_n)\varphi+H_t(u_n,v_n)\psi\right)\,dx\nonumber\\
			+\mu\int_\Omega|u_n|^{p^{\ast}-2}u_n\varphi\,dx-\int_\Omega|v_n|^{q^{\ast}-2}v_n\psi\,dx=o(1)\|(\varphi,\psi)\|.\label{E'(un,vn)(phi,psi)}
		\end{align}
		For $(\varphi,\psi)=(u_n,0)$ and $(\varphi,\psi)=(0,v_n)$ we obtain
		\begin{align}\label{E'(wn)un}
			\int_\Omega|\nabla u_n|^p\,dx -
			\lambda\int_\Omega H_s(u_n,v_n)u_n\,dx + \mu \int_\Omega|u_n|^{p^{\ast}}\,dx = o(1)\|w_n\|
		\end{align}
		and
		\begin{align}\label{E'(wn)vn}
			\int_\Omega|\nabla v_n|^q\,dx -
			\lambda\int_\Omega H_t(u_n,v_n)v_n\,dx -\int_\Omega|v_n|^{q^{\ast}}\,dx = o(1)\|w_n\|.
		\end{align}
		Now we consider $r\in (q,q^{\ast})$ given in \ref{H_v ep Cep}.
		We  multiply \eqref{E'(wn)vn} by $1/r$ and subtract it from \eqref{E(wn) no mu}, obtaining:
		\begin{align*}
			\frac{1}{p} \int_\Omega|\nabla u_n|^p\,dx &+\left(\frac{1}{q}-\frac{1}{r}\right)  \int_\Omega|\nabla v_n|^q\,dx +
			\left(\frac{1}{r}-\frac{1}{q^{\ast}}\right) \int_\Omega|v_n|^{q^{\ast}}\,dx\nonumber\\
			&\leq \frac{\lambda}{r} \int_\Omega \left[rH(u_n,v_n)-H_t(u_n,v_n)v_n\right]\,dx + c + o(1) + o(1)\|w_n\|.
		\end{align*}
		Considering $\lambda\in(0,\tilde\lambda]$ and $\varepsilon=\theta/(2\tilde\lambda)$, for
		\(
		\theta= \min\{ \frac{1}{r} - \frac{1}{q^{\ast}} \,;\, \frac{1}{q} - \frac{1}{r}\},
		\)
		by Lemma \ref{Lemma H}\ref{Lemma H grad}  we obtain
		\begin{align*}
			\frac{\lambda}{r} \int_\Omega \left[rH(u_n,v_n)-H_t(u_n,v_n)v_n\right]\,dx \leq
			\frac{\theta}{2} \int_\Omega|\nabla u_n|^p\,dx  +\frac{\theta}{2} \int_\Omega| v_n|^{q^{\ast}}\,dx + {C}, 
		\end{align*}
		where ${C}={C}(p,q,r,N,\Omega,\tilde\lambda)$. Hence 
		\begin{align*}
			\frac{\theta}{2} \int_\Omega|\nabla u_n|^p\,dx 
			&+ \frac{\theta}{2}\int_\Omega|\nabla v_n|^q\,dx 
			+ \frac{\theta}{2}\int_\Omega|v_n|^{q^{\ast}}\,dx\leq c + o(1)(1 + \|w_n\|) 
			+ C. 
		\end{align*}
		This implies that
		$\|w_n\|$ is bounded. Moreover,  there exists $K_{\tilde\lambda}=K(p,q,r,N,\Omega,\tilde\lambda)>0$ such that
		\begin{align}\label{unif bdd}
			\limsup_{n\to\infty}\|w_n\|\leq K_{\tilde\lambda}
		\end{align}
		for any $(PS)_c$ sequence $\{w_n\}_n$ of $E$ with $c<S_q^{N/q}/N$ and any $\mu>0$ and  $\lambda\in(0,\tilde\lambda]$.
		Now, taking a subsequence, we may assume $(u_n, v_n) \rightharpoonup (u, v)$   in $W$, $u_n \to u$  in $L^s(\Omega)$ for $s \in [1, p^{\ast})$,  $v_n\to v$ in $L^s(\Omega)$ for $s \in [1, q^{\ast})$, and $u_n(x) \to u(x)$, $v_n(x) \to v(x)$ almost everywhere in $\Omega$. Using \ref{H subcritic} and 
		the general Lebesgue dominated convergence theorem, we see that
		\[
		\int_\Omega H_s(u_n,v_n)\varphi \,dx \to \int_\Omega H_s(u,v)\varphi\,dx\quad \text{and}\quad
		\int_\Omega H_t(u_n,v_n)\psi\,dx \to \int_\Omega H_t(u,v)\psi\,dx,
		\]
		for all $(\varphi,\psi)\in W$, as well as
		\[
		\int_\Omega H_s(u_n,v_n)u_n\,dx \to \int_\Omega H_s(u,v)u\,dx\quad \text{and}\quad
		\int_\Omega H_t(u_n,v_n)v_n\,dx \to \int_\Omega H_t(u,v)v\,dx.
		\]
		Using \eqref{H MH} we also get 
		\[
		\int_\Omega H(u_n,v_n) \,dx \to \int_\Omega H(u,v)\,dx.
		\]
		Moreover, due to the weak convergences $|u_n|^{p^\ast-2}u_n\rightharpoonup |u|^{p^\ast-2}u$ in  $L^\frac{p^\ast}{p^\ast-1}(\Omega)$ and $|v_n|^{q^\ast-2}v_n\rightharpoonup |v|^{q^\ast-2}v$ in  $L^\frac{q^\ast}{q^\ast-1}(\Omega)$ we see that
		\[
		\int_\Omega |u_n|^{p^\ast-2}u_n\varphi \,dx \to \int_\Omega |u|^{p^\ast-2}u\varphi\,dx\quad \text{and}\quad
		\int_\Omega |v_n|^{q^\ast-2}v_n\psi\,dx \to \int_\Omega |v|^{q^\ast-2}v\psi\,dx,
		\]
		for any $(\varphi,\psi)\in W$. Then,  passing to the limit in \eqref{E'(un,vn)(phi,psi)}, with $(\varphi,\psi)=(u, v),\, (u, 0)$ and $(0, v)$  yields
		\begin{align}\label{E'(u,v)(u,v)}
			\int_\Omega\left(|\nabla u|^p+|\nabla v|^q\right)\,dx -\lambda \int_\Omega \nabla H(u,v)\cdot(u,v)\,dx + \mu \int_\Omega|u|^{p^{\ast}}\,dx - \int_\Omega|v|^{q^{\ast}}\,dx=0
		\end{align}
		and
		\begin{align}
			\int_\Omega|\nabla u|^p\,dx -\lambda \int_\Omega H_s(u,v)u\,dx + \mu \int_\Omega|u|^{p^{\ast}}\,dx=0,\label{E'(u,v)(u,0)}\\
			\int_\Omega|\nabla v|^q\,dx -\lambda\int_\Omega H_t(u,v)v\,dx - \int_\Omega|v|^{q^{\ast}}\,dx=0.\label{E'(u,v)(0,v)}
		\end{align}
		Let $\widetilde{u}_n = u_n - u$, $\widetilde{v}_n = v_n - v$. As in \cite{Boccardo-Murat}, it is easy to check that $\nabla u_n\to\nabla u$ almost everywhere in $\Omega$ for a further subsequence. Consequently,
		by the Brezis-Lieb lemma (see \cite{Brezis-Lieb-1983}) we have
		\[
		\int_\Omega|\nabla \widetilde{u}_n|^p\,dx =\int_\Omega|\nabla {u}_n|^p\,dx-\int_\Omega|\nabla u|^p\,dx+o(1),\quad
		\int_\Omega|\nabla \widetilde{v}_n|^q\,dx =\int_\Omega|\nabla {v}_n|^q\,dx-\int_\Omega|\nabla v|^q\,dx+o(1)
		\]
		\[
		\int_\Omega|\widetilde u_n|^{p^{\ast}}\,dx=\int_\Omega|u_n|^{p^{\ast}}\,dx-\int_\Omega|u|^{p^{\ast}}\,dx+o(1),\quad
		\int_\Omega|\widetilde v_n|^{q^{\ast}}\,dx=\int_\Omega|v_n|^{q^{\ast}}\,dx-\int_\Omega|v|^{q^{\ast}}\,dx+o(1).
		\]
		Thus, subtracting \eqref{E'(u,v)(u,0)} from \eqref{E'(wn)un} we get 
		\begin{align}\label{E'(wn)un tilde}
			\int_\Omega|\nabla \widetilde{u}_n|^p\,dx + \mu \int_\Omega|\widetilde{u}_n|^{p^{\ast}}\,dx=o(1),
		\end{align}
		which implies that $\widetilde{u}_n\to0$ in $W^{1,p}_0(\Omega)$,  that is,  ${u}_n\to u$ in $W^{1,p}_0(\Omega)$. Also, subtracting \eqref{E'(u,v)(0,v)} from \eqref{E'(wn)vn} we obtain
		\begin{align*}
			\int_\Omega|\nabla \widetilde{v}_n|^q\,dx  =\int_\Omega|\widetilde{v}_n|^{q^{\ast}}\,dx+o(1).
		\end{align*}
		Using \eqref{S def}  we see that
		\[
		\int_\Omega|\nabla \widetilde{v}_n|^q\,dx \geq S_q\left(\int_\Omega|\widetilde{v}_n|^{q^*}\,dx \right)^{q/q^*} = S_q\left(\int_\Omega|\nabla\widetilde{v}_n|^{q}\,dx +o(1)\right)^{q/q^*}.
		\]
		If $w_n$ does not have a strongly convergent subsequence, then $\|\nabla \widetilde{v}_n\|_q\geq \delta>0$ for some $\delta$. Thus
		\begin{align}\label{tilde vn S}
			\int_\Omega|\nabla \widetilde{v}_n|^q\,dx \geq S_q^{N/q}+o(1).
		\end{align}
		Now, recalling that $\{w_n\}_n$ is bounded,  we  multiply \eqref{E'(wn)un} by $1/p^{\ast}$,  multiply \eqref{E'(wn)vn} by $1/q^{\ast}$  and subtract both results from \eqref{E(wn)}, obtaining
		\begin{align*}
			\frac{1}{N} \int_\Omega\left(|\nabla u_n|^p+|\nabla v_n|^q\right)\,dx -\lambda
			\int_\Omega \left[H(u,v)-\frac{1}{p^\ast}H_u(u,v)u-\frac{1}{q^\ast}H_v(u,v)v \right]\,dx + o(1) = c.
		\end{align*}
		Multiplying \eqref{E'(u,v)(u,v)} by $1/N$ and subtracting it from this last inequality we obtain
		\begin{align*}
			c&=\frac{1}{N} \int_\Omega\left(|\nabla \widetilde{u}_n|^p+|\nabla \widetilde{v}_n|^q\right)\,dx 
			-\lambda
			\int_\Omega \left[H(u,v)-\frac{1}{p}H_u(u,v)u-\frac{1}{q}H_v(u,v)v \right]\,dx\\
			&\quad-\frac{\mu}{N} \int_\Omega|u|^{p^{\ast}}\,dx +\frac{1}{N}\int_\Omega|v|^{q^{\ast}}\,dx
			+ o(1).
		\end{align*}
		Since \eqref{E'(wn)un tilde} and \eqref{tilde vn S} hold we see that
		\begin{equation}\label{c above}
			\frac{1}{N} S_q^{N/q} -\lambda
			\int_\Omega \left[H(u,v)-\frac{1}{p}H_u(u,v)u-\frac{1}{q}H_v(u,v)v \right]\,dx- \frac{\mu}{N}\int_\Omega|u|^{p^{\ast}}\,dx \leq c .
		\end{equation}
		Using again the Sobolev inequality \eqref{S def}, \eqref{E'(wn)un tilde} and \eqref{unif bdd} we reach
		\[
		S_p\left(\int_\Omega|u|^{p^{\ast}}\,dx\right)^\frac{p}{p^{\ast}} \le
		\int_\Omega|\nabla u|^p\,dx
		=\lim_{n\to\infty}\int_\Omega|\nabla {u}_n|^p\,dx \le K_{\tilde\lambda}^p,
		\]
		which means that
		\[
		\frac{1}{N}\int_\Omega|u|^{p^{\ast}}\,dx \leq \frac{K_{\tilde\lambda}^{p^{\ast}}}{NS_p^\frac{p^{\ast}}{p}}=: d_{\tilde\lambda}.
		\]
		This, combined with condition \ref{AR} and \eqref{c above}, yields
		\begin{equation}\label{c absurd}
			\frac{1}{N} S_q^{N/q} -\mu d_{\tilde\lambda} \leq c.
		\end{equation}
		Therefore, for $c\le S_q^{N/q}/N$, whenever a $(PS)_c$ sequence $\{{w}_n\}$ does not have a strong convergent subsequence, the inequality \eqref{c absurd} must occur. This proves this lemma.
	\end{proof}
	
	\begin{lemma}\label{Lemma PS bounded}
		Assume that $H$ satisfies \ref{H basic} and \ref{H subcritic}. Let $K>0$ and $c<0$. Then there are $\tilde\lambda, \tilde\mu>0$, depending on $K$,
		such that for $\lambda\in(0,\tilde\lambda]$ and $\mu\in(0,\tilde\mu]$,  each  $(PS)_c$ sequence for $E$, $\{w_n\}$, satisfying
		\begin{equation}\label{wn bounded <1}
			\limsup_{n\to\infty}\|w_n\|\leq K
		\end{equation}
		has a subsequence that converges in $W$.
	\end{lemma}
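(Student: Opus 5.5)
Given a $(PS)_c$ sequence $\{w_n\}=\{(u_n,v_n)\}$ with $c<0$ and $\limsup\|w_n\|\le K$, I will first pass to a subsequence as in the proof of Lemma \ref{Lemma PS}: $w_n\rightharpoonup w=(u,v)$ in $W$, strongly in the subcritical Lebesgue spaces, a.e. in $\Omega$, and (following \cite{Boccardo-Murat}) with a.e. convergence of gradients. Boundedness is given by hypothesis, so I need neither \ref{H_v ep Cep} nor \ref{AR}. Exactly as before, $w$ satisfies \eqref{E'(u,v)(u,0)}--\eqref{E'(u,v)(0,v)}. Subtracting these identities and using the Brezis--Lieb lemma, I obtain \eqref{E'(wn)un tilde}, hence $u_n\to u$ in $W^{1,p}_0(\Omega)$. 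I also obtain $\|\nabla\tilde v_n\|_q^q=\|\tilde v_n\|_{q^*}^{q^*}+o(1)$. So if strong convergence fails, \eqref{tilde vn S} holds: $\|\nabla\tilde v_n\|_q^q\ge S_q^{N/q}+o(1)$.

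The task is then to show this contradicts $c<0$ once $\lambda,\mu$ are small. As in Lemma \ref{Lemma PS}, I will form $E(w_n)-\frac1{p^*}E'(w_n)(u_n,0)-\frac1{q^*}E'(w_n)(0,v_n)$ and subtract $\frac1N$ times \eqref{E'(u,v)(u,v)}. This gives
\[
c=\frac1N\int_\Omega\bigl(|\nabla\tilde u_n|^p+|\nabla\tilde v_n|^q\bigr)dx-\lambda\int_\Omega\Bigl[H(u,v)-\tfrac1pH_u(u,v)u-\tfrac1qH_v(u,v)v\Bigr]dx-\frac{\mu}{N}\int_\Omega|u|^{p^*}dx+\frac1N\int_\Omega|v|^{q^*}dx+o(1).
\]
Without \ref{AR} I cannot discard the $H$-term by sign. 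I will bound it instead: by \ref{H subcritic} and \eqref{H MH}, the integral of $|H|+|H_u u|+|H_v v|$ over $\Omega$ is at most a constant $C_K$ depending only on $K$, because $\|w\|\le K$ by weak lower semicontinuity. Similarly, Sobolev gives $\int_\Omega|u|^{p^*}\le K^{p^*}S_p^{-p^*/p}$. Dropping the nonnegative $|v|^{q^*}$ term and using \eqref{tilde vn S}, I get
\[
c\ge\frac1N S_q^{N/q}-\lambda C_K-\mu\frac{K^{p^*}}{NS_p^{p^*/p}}.
\]

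Now I choose $\tilde\lambda,\tilde\mu$ depending on $K$ so that $\tilde\lambda C_K+\tilde\mu K^{p^*}/(NS_p^{p^*/p})\le \frac1N S_q^{N/q}$. Then the right-hand side is $\ge0>c$, a contradiction, so $\tilde v_n\to0$ in $W^{1,q}_0(\Omega)$ and $w_n\to w$ in $W$. Note that $\tilde\lambda,\tilde\mu$ do not depend on $c$, which matches the statement.

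The step I expect to need the most care is the uniform bound on the $H$-terms. The exponents $q_1,p_2$ in \ref{H subcritic} allow mixed growth up to critical order, for instance $|v|^{q_1-1}|u|$. I will handle this with Young's inequality, as in the proof of Lemma \ref{Lemma H}\ref{Lemma H < I(w)}, which reduces everything to powers $\le p^*$ of $|u|$ and $\le q^*$ of $|v|$. Those are controlled by $K$ via Sobolev.

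The passage to the limit in the $H$ terms along $w_n$ was already justified in Lemma \ref{Lemma PS}. That argument only used \ref{H subcritic}, the bound on $\|w_n\|$, and the generalized dominated convergence theorem, so it applies here too.
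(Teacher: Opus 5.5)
Your proposal is correct and follows essentially the same route as the paper. You rederive the lower bound for $c$ from the proof of Lemma~\ref{Lemma PS} (Brezis--Lieb, strong convergence of $u_n$, and the Sobolev threshold for $\tilde v_n$), bound the $H$-term and $\frac{\mu}{N}\int_\Omega|u|^{p^*}dx$ by constants depending only on $K$ via \ref{H subcritic}, \eqref{H MH}, the Sobolev inequality and $\|w\|\le K$, and conclude from $c<0$ once $\lambda,\mu$ are small; your Young's inequality treatment of the mixed terms $|v|^{q_1-1}|u|$ and $|u|^{p_2-1}|v|$ simply makes explicit what the paper leaves implicit in its constant $C_2$.
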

	\begin{proof}
		For $c<0$, let $\{w_n\}_n $ be a $(PS)_c$ sequence for $E$ in $W$ satisfying \eqref{wn bounded <1}. Since $w_n= (u_n, v_n)$  is bounded, taking a subsequence, we may assume $(u_n, v_n) \rightharpoonup (u, v)$   in $W$, $u_n \to u$  in $L^s(\Omega)$ for $s \in [1, p^{\ast})$,  $v_n\to v$ in $L^s(\Omega)$ for $s \in [1, q^{\ast})$, and $u_n(x) \to u(x)$, $v_n(x) \to v(x)$ almost everywhere in $\Omega$. 
		Notice that
		\[
		\|(u,v)\|\leq\liminf_{n\to\infty}\|(u_n,v_n)\|\leq K.
		\]
		As in the proof of Lemma \ref{Lemma PS} (see \eqref{c above} and above) we see that, if $w_n$ does not have a strongly convergent subsequence, it holds
		\begin{equation}\label{c above 2}
			\frac{1}{N} S_q^{N/q} -\lambda
			\int_\Omega \left[H(u,v)-\frac{1}{p}H_u(u,v)u-\frac{1}{q}H_v(u,v)v \right]\,dx- \frac{\mu}{N}\int_\Omega|u|^{p^{\ast}}\,dx \leq c.
		\end{equation}
		Using again the Sobolev inequality \eqref{S def} and \eqref{wn bounded <1} we reach
		\[
		\frac{1}{N}\int_\Omega|u|^{p^{\ast}}\,dx \le
		\frac{1}{N}\left(\frac{1}{S_p}\int_\Omega|\nabla u|^p\,dx\right)^\frac{p^{\ast}}{p}
		\le \frac{1}{N}\left(\frac{K^p}{S_p}\right)^\frac{p^{\ast}}{p}=:C_1.
		\]
		From \ref{H subcritic} and \eqref{H MH} we also have
		\[
		\int_\Omega \left|H(u,v)-\frac{1}{p}H_u(u,v)u-\frac{1}{q}H_v(u,v)v \right|\,dx\leq {C}_2
		\]
		for some $C_2(K,N,p,q,\Omega)>0$ ($C_2$ also depend on the exponents from \ref{H subcritic} and \eqref{H MH}).
		Recalling that $c<0$, these inequalities combined with \eqref{c above 2} yield
		\begin{equation*}
			\frac{1}{N} S_q^{N/q} <\mu C_1 +\lambda C_2,
		\end{equation*}
		which is not possible for $\lambda$ and $\mu$ small. Therefore, for $\lambda$ and $\mu$ small enough, $\{w_n\}$ must have a convergent subsequence.
	\end{proof}
	
	\section{Proof of the main results}\label{Sec proofs}
	
	In this section we present the proofs of our results on the existence and multiplicity of solutions to the system \eqref{Sys}.
	
	\subsection{Proof of Theorem \ref{th 1 =1}}
	Here we assume that $\mu>0$, $k,m\geq1$ are such that
	\(
	\lambda_k=\cdots=\lambda_{k+m-1}<\lambda_{k+m},
	\)
	and $0<\lambda<\lambda_k$. We are going to apply Theorem \ref{Theorem 2.5 MP} with $c^*=c^*(\mu):=S_q^{N/q}/N-\mu d_{\lambda_k}$. Due to Lemma \ref{Lemma PS} we know that the functional $E=E_{\lambda,\mu}$ satisfies $(PS)_c$ condition for all $c<c^*$.
	
	For $\delta\in(0,\lambda_k)$ to be chosen later, we consider
	\begin{align}\label{epsilon choice}
		\lambda\in(\lambda_k-\delta,\lambda_k),\quad 0<\varepsilon<\min\left\{\frac{\delta}{\lambda_k-\lambda}-1, \frac{\lambda_{k+m}-\lambda_{k+m-1}}{\delta} \right\}
	\end{align}
	and denote
	\begin{align}\label{epsilon_lambda and tau}
		\varepsilon_\lambda=\varepsilon(\lambda_k-\lambda)\quad\text{and}\quad\tau:=\lambda_{k+m-1}+\varepsilon_\lambda. 
	\end{align}
	Since $\tau\in(\lambda_{k+m-1},\lambda_{k+m})$, due to Theorem \ref{lambdak}\ref{lambdak(iii)} we know that
	\[
	i(\mathcal{M}^+ \setminus \Psi_{{\tau}}) = k+m-1.
	\]
	Since $\mathcal{M}^+ \setminus \Psi_{{\tau}}$ is an open symmetric subset of $\mathcal M$, then it has
	a compact symmetric subset $A_0$ of index $k + m-1$ (see the proof of Proposition 3.1 in
	Degiovanni and Lancelotti \cite{De-Lan-2007}).
	Now, let us define
	\[
	B_0 = \Psi_{\lambda_{k}} \cup\left(\mathcal{M}\setminus\mathcal{M}^+\right).
	\]
	Note that $\mathcal{M} \setminus B_0 = \mathcal{M}^+ \setminus \Psi_{\lambda_{k}}$. Here we have two possibilities: $\lambda_1=\cdots=\lambda_k$ or $\lambda_{l-1}<\lambda_l=\cdots=\lambda_k$ for some $l\geq2$. In the first case we have
	\[
	i(\mathcal{M} \setminus B_0) = i(\mathcal{M}^+ \setminus \Psi_{\lambda_{1}})=i(\emptyset)=0.
	\]
	In the second case,  using Proposition \ref{lambdak}\ref{lambdak(iii)} again, we get
	\[
	i(\mathcal{M} \setminus B_0) = i(\mathcal{M}^+ \setminus \Psi_{\lambda_{l}}) = l-1\leq k-1.
	\]
	In order to apply Theorem \ref{Theorem 2.5 MP}, 
	for $R>\rho>0$ to be chosen, let
	\[
	A = \{ w_R : w \in A_0 \} 
	,\quad
	B = \{ w_\rho  : w \in B_0 \}
	\quad\text{and}\quad
	X = \{ w_t : w \in A_0, \, t \in [0,R] \}.
	\]
	For $w=(u,v) \in \mathcal{M}$ and $t\geq0$, recalling that $w_t=(t^\frac{1}{p}u,t^\frac{1}{q}v)$, we have
	\begin{align}\label{E(wt) M}
		E(w_t) &=I(w_t)- {\lambda}{ \int_\Omega H(w_t)\,dx}  + \frac{\mu}{p^{\ast}} \int_\Omega |t^\frac{1}{p}u|^{p^{\ast}}\,dx -\frac{1}{q^{\ast}} \int_\Omega |t^\frac{1}{q}v|^{q^{\ast}}\,dx\nonumber\\
		&= t \left( 1 - {\lambda}{ \int_\Omega H(u,v)\,dx} \right) + \frac{\mu t^{p^{\ast}/p}}{p^{\ast}} \int_\Omega |u|^{p^{\ast}}\,dx -\frac{t^{q^{\ast}/q}}{q^{\ast}} \int_\Omega |v|^{q^{\ast}}\,dx.
	\end{align}
	By the boundedness of $\mathcal{M}$ and the Sobolev inequality, we get 
	\begin{equation}\label{bound from M}
		\frac{1}{p^{\ast}} \int_\Omega |u|^{p^{\ast}}\,dx\leq c_1\quad\mbox{and}\quad\frac{1}{q^{\ast}} \int_\Omega |v|^{q^{\ast}}\,dx\leq c_1
	\end{equation}
	for some $c_1=c_1(N,p,q)>0$, for all $w=(u,v)\in\mathcal{M}$. 
	For $w\in B_0$, since $ \int_\Omega H(w)\,dx=1/\Psi(w)$ for $w\in\mathcal{M}^+$ and $ \int_\Omega H(w)\,dx\leq0$ for $w\in\mathcal{M}\setminus\mathcal{M}^+$, from \eqref{E(wt) M} and \eqref{bound from M} we get
	\begin{equation*}
		E(w_t) \geq
		\begin{cases}
			t \left( 1 - \frac{\lambda}{ \lambda_k} \right)  -t^{q^{\ast}/q} c_1,&\text{if} \,\,w\in\Psi_{\lambda_k}\\
			t   -t^{q^{\ast}/q} c_1,&\text{if} \,\, w\in\mathcal{M}\setminus\mathcal{M}^+.
		\end{cases}
	\end{equation*}
	Then we see that  there exists $\rho> 0$ small enough, independently of $\mu>0$, such that
	\[
	\inf_{w \in B} E(w)= \inf_{w \in B_0} E(w_\rho)> 0.
	\]
	Now we observe that, for $w \in A_0\subset\mathcal{M}^+ \setminus \Psi_{{\tau}}$, it holds $ \int_\Omega H(w)\,dx>1/\tau>1/\lambda_{k+m}$ (see \eqref{epsilon_lambda and tau}). For $\sigma=1/(2\lambda_{k+m})$, it follows from Lemma \ref{Lemma H}\ref{Lemma H p star} that there exists $C>0$ such that
	\[
	\frac{1}{\lambda_{k+m}}<\int_\Omega H(w)\,dx\leq \frac{1}{2\lambda_{k+m}}+C\left(\int_\Omega |v|^{q^*}\,dx\right)^{\tilde{q}/q^*}+C\left(\int_\Omega |v|^{q^*}\,dx\right)^{\bar{q}/q^*}. 
	\]
	Then, there is $c_2=c_2(N,p,\tilde{p},\bar{p},q,\tilde{q},\bar{q},\Omega,\lambda_{k+m})\in(0,1)$ such that
	\begin{equation}\label{v bdd below}
		\frac{1}{q^{\ast}} \int_\Omega |v|^{q^{\ast}}\,dx\geq 2c_2,
	\end{equation}
	whenever $(u,v)\in A_0$. Hence, using \eqref{E(wt) M}, \eqref{bound from M} and \eqref{v bdd below} we get
	\[
	E(w_t) \leq {t}  + \frac{\mu t^{p^{\ast}/p}}{p^{\ast}} \int_\Omega |u|^{p^{\ast}}\,dx 
	-\frac{ t^{q^{\ast}/q}}{q^{\ast}} \int_\Omega |v|^{q^{\ast}}\,dx
	\leq {t}  + \mu c_1  t^{p^{\ast}/p}-2c_2 t^{q^{\ast}/q},\quad \forall w\in A_0,\,\forall t\geq0.
	\]
	Considering $0<\mu<c_2/c_1$, since $p^\ast\leq q^\ast$ we obtain
	\[
	E(w_t)\leq 2{t}  -c_2{ t^{q^{\ast}/q}}\leq 0,\quad\forall w\in A_0,\, \forall t\geq R,
	\]
	if $R>\rho$ is sufficiently large. In particular, we get 
	\begin{align*}
		\sup_{w\in A}E(w)=\sup_{w\in A_0}E(w_R)\leq 0.
	\end{align*}
	It remains to estimate $E$ on $X$. Using again \eqref{E(wt) M}, \eqref{bound from M} and \eqref{v bdd below}, for $0<\mu<c_2/c_1$ we get
	\[
	E(w_t) \leq {t} \left( 1 - \frac{\lambda}{\tau} +\mu c_1 \right) -c_2t^{q^{\ast}/q},
	\quad\forall w\in A_0,\,\forall t\geq0.
	\]
	By the definition of $\tau$, given in \eqref{epsilon_lambda and tau}, and the choice of $\varepsilon$, given in \eqref{epsilon choice}, we see that
	$$
	{a}_k:= 1 - \frac{\lambda}{\tau} +\mu c_1 <\frac{{(1+\varepsilon)(\lambda_{k}-\lambda )}}{\lambda_{k}}+\mu c_1\,<\,\frac{2\delta}{\lambda_{k}}
	$$ 
	if $0<\mu<\delta/(c_1\lambda_k)$. Now we  choose $\delta:=\lambda_k\min\{S_q(c_2q^{\ast})^{(N-q)/N}/(3q);1\}$. From \eqref{epsilon choice}  we obtain
	\begin{align*}
		\max_{w\in A_0,\,t\geq0} E(w_t) &\leq \max_{t\geq0}\left({a}_kt-c_2{ t^{q^{\ast}/q}}\right)=\frac{(q{a}_k)^{N/q}}{N(c_2q^\ast)^{(N-q)/q}}<\frac{(2\delta q/\lambda_k)^{N/q}}{N(c_2q^\ast)^{(N-q)/q}}=:b_k\\
		&<\frac{(3q\delta/\lambda_k)^{N/q}}{N(c_2q^\ast)^{(N-q)/q}}=
		\frac{S_q^{N/q}}{N}.
	\end{align*}
	Defining $\mu_k=\min\{c_2/c_1;\delta/(c_1\lambda_k);({S_q^{N/q}}/{N}-b_k)/d_{\lambda_k}\}$, with $d_{\tilde\lambda}$ as in Lemma \ref{Lemma PS}, we reach 
	\[
	\sup_{w \in X} E(w)=\sup_{w \in A_0,\,t\in[0,R]} E(w_t) < \frac{S_q^{N/q}}{N}-\mu d_{\lambda_k}=c^\ast(\mu)
	\]
	for all $\mu\in(0,\mu_k)$ and $\lambda\in(\lambda_k-\delta,\lambda_k)$. Thus, Theorem \ref{Theorem 2.5 MP} ensures that $E$ has $m$ distinct pairs of critical points $\pm w_j$, $j=1,\cdots,m$, satisfying
	\[
	0<E(w_j)\leq\sup_{w \in X} E(w)\leq \frac{(q{a}_k)^{N/q}}{N(c_2q^\ast)^{(N-q)/q}}.
	\]
	Denoting $E_{\lambda,\mu}$ the functional and $\pm w_j^{\lambda,\mu}=\pm (u_j^{\lambda,\mu},v_j^{\lambda,\mu})$ these critical points, 
	this last inequality implies that $E_{\lambda,\mu}(w_j^{\lambda,\mu})\to0$ as $\lambda\to\lambda_k^-$ and $\mu\to0^+$. This means that
	\begin{align}\label{E(w l mu)}
		\frac{1}{p} \int_\Omega& |\nabla u_j^{\lambda,\mu}|^p\, dx+\frac{1}{q} \int_\Omega|\nabla v_j^{\lambda,\mu}|^q dx -\lambda\int_\Omega H(u_j^{\lambda,\mu},v_j^{\lambda,\mu})\,dx-\frac{1}{q^{\ast}}\int_\Omega|v_j^{\lambda,\mu}|^{q^{\ast}}\,dx \nonumber\\
		&=-\frac{\mu}{p^{\ast}}\int_\Omega|u_j^{\lambda,\mu}|^{p^{\ast}}\,dx+ o(1).
	\end{align}
	Since $\pm w_j^{\lambda,\mu}$, $j=1,\cdots,m$, are critical points for $E_{\lambda,\mu}$, we have
	\begin{align}\label{E'(w l mu)u0}
		0=\frac{1}{p}E_{\lambda,\mu}'(w_j^{\lambda,\mu})(u_j^{\lambda,\mu},0)=\frac{1}{p}\int_\Omega |\nabla u_j^{\lambda,\mu}|^p\, dx-\frac{\lambda}{p}\int_\Omega H_u(u_j^{\lambda,\mu},v_j^{\lambda,\mu})u_j^{\lambda,\mu}\,dx+\frac{\mu}{p}\int_\Omega|u_j^{\lambda,\mu}|^{p^{\ast}}\,dx
	\end{align}
	and
	\begin{align}\label{E'(w l mu)0v}
		0=\frac{1}{q}E_{\lambda,\mu}'(w_j^{\lambda,\mu})(0,v_j^{\lambda,\mu})=\frac{1}{q}\int_\Omega |\nabla v_j^{\lambda,\mu}|^q\, dx-\frac{\lambda}{q}\int_\Omega H_v(u_j^{\lambda,\mu},v_j^{\lambda,\mu})v_j^{\lambda,\mu}\,dx-\frac{1}{q}\int_\Omega|v_j^{\lambda,\mu}|^{q^{\ast}}\,dx.
	\end{align}
	Adding the equations given in \eqref{E'(w l mu)u0} and \eqref{E'(w l mu)0v} and subtracting the results from \eqref{E(w l mu)} we get
	\begin{align}\label{E(w l mu)+-}
		-\lambda\int_\Omega &\left[H(u_j^{\lambda,\mu},v_j^{\lambda,\mu})-\frac{1}{p}H_u(u_j^{\lambda,\mu},v_j^{\lambda,\mu})u_j^{\lambda,\mu}-\frac{1}{q}H_v(u_j^{\lambda,\mu},v_j^{\lambda,\mu})v_j^{\lambda,\mu}\right]\,dx +\left(\frac{1}{q}-\frac{1}{q^{\ast}}\right)\int_\Omega|v_j^{\lambda,\mu}|^{q^{\ast}}\,dx \nonumber\\
		&= \mu\left(\frac{1}{p}-\frac{1}{p^{\ast}}\right)\int_\Omega|u_j^{\lambda,\mu}|^{p^{\ast}}\,dx+o(1).
	\end{align}
	We also know, due to Lemma \ref{Lemma PS} (see \eqref{unif bdd}), that $\{ w_j^{\lambda,\mu}\}_{\lambda,\mu}$ is bounded in $W$, for $\lambda\in(\lambda_k-\delta,\lambda_k)$ and $\mu\in(0,\mu_k)$. So, 
	\[
	\mu\int_\Omega|u_j^{\lambda,\mu}|^{p^{\ast}}\,dx\to0\quad\text{as}\quad \mu\to0^+.
	\]
	Thus, using \ref{AR}, it follows from \eqref{E(w l mu)+-} that
	\[
	\int_\Omega|v_j^{\lambda,\mu}|^{q^{\ast}}\,dx\to0\quad\text{as}\quad\lambda\to\lambda_k^-\quad\text{and}\quad \mu\to0^+.
	\]
	Then, using again the boundedness of $\{ u_j^{\lambda,\mu}\}_{\lambda,\mu}$ in $W^{1,p}_0(\Omega)$, due to condition \ref{H ep Cep} we see that
	\[
	\int_\Omega H(u_j^{\lambda,\mu},v_j^{\lambda,\mu})\,dx\to0\quad\text{as}\quad\lambda\to\lambda_k^-\quad\text{and}\quad \mu\to0^+.
	\]
	Jointing these last three convergences with \eqref{E(w l mu)} we obtain $w_j^{\lambda,\mu}\to0$, as $\lambda\to\lambda_k^-$ and $\mu\to0^+.$
	This concludes the proof of Theorem \ref{th 1 =1}.

	\subsection{Proof of Theorem \ref{Th2 >1}}
	
	The proof of Theorem \ref{Th2 >1} is based on Corollary \ref{Cor-2.6}. For $\lambda,\,\mu>0$, and $H$ satisfying \ref{H basic}-\ref{AR}, with $\gamma>1$, from Lemma \ref{Lemma PS} we know that $E$ satisfies the $(PS)_c$ condition for all 
	\[c<\frac{1}{N}S_q^{N/q}-\mu d_\lambda=:c^*(\lambda,\mu).\]
	To get a compact set satisfying the assumptions of Corollary \ref{Cor-2.6}, 
	let us consider $\widetilde{H}(u,v)=|u|^\alpha|v|^\beta$ with $\alpha,\,\beta>1$ such that $\frac{\alpha}{p}+\frac{\beta}{q}=1$. Since $\widetilde{H}$ satisfies \ref{H basic}-\ref{AR}, with $\gamma=1$ in \ref{H scaling}, we can apply Theorem \ref{lambdak} to get a sequence  $\{\lambda_k\}$ of positive eigenvalues for problem \eqref{NE}. We have $\lambda_k\nearrow\infty$ and so, for each $m\geq1$ there is $k\geq m$ such that
	$\lambda_k<\lambda_{k+1}$. Let us assume $k=m$. 
	From Theorem \ref{lambdak}\ref{lambdak(iii)} we know that
	\[
	i(\mathcal{M}^+\setminus\Psi_{\lambda_{m+1}})=m.
	\]
	We recall that $\mathcal{M}^+=\{w\in W:I(w)=1\,\text{and}\, \int_\Omega\widetilde{H}(w)\,dx>0\}$.
	Since $\mathcal{M}^+ \setminus \Psi_{\lambda_{m+1}}$ is an open symmetric subset of $\mathcal M$, then it has
	a compact symmetric subset $A_0$ of index $m$ (see the proof of Proposition 3.1 in
	Degiovanni and Lancelotti \cite{De-Lan-2007}).
	In order to apply Corollary \ref{Cor-2.6}, 
	for $R>\rho>0$ to be chosen later, let
	\[
	A = \{ w_R : w \in A_0 \} 
	\quad\text{and}\quad
	X = \{ w_t : w \in A_0, \, t \in [0,R] \}.
	\]
	Now, for a function $H$ satisfying \ref{H basic}-\ref{AR}, with $\gamma>1$, we are going to estimate the functional $E$, associated with system \eqref{Sys}, on these sets $A$ and $X$.
	For $w=(u,v) \in \mathcal{M}$ and $t\geq0$, recalling that $w_t=(t^\frac{1}{p}u,t^\frac{1}{q}v)$, we have
	\begin{align}\label{E(wt) M th2}
		E(w_t) &=I(w_t)- {\lambda}{ \int_\Omega H(w_t)\,dx}  + \frac{\mu}{p^{\ast}} \int_\Omega |t^\frac{1}{p}u|^{p^{\ast}}\,dx -\frac{1}{q^{\ast}} \int_\Omega |t^\frac{1}{q}v|^{q^{\ast}}\,dx\nonumber\\
		&= t  - {\lambda}t^\gamma{ \int_\Omega H(u,v)\,dx}  + \frac{\mu t^{p^{\ast}/p}}{p^{\ast}} \int_\Omega |u|^{p^{\ast}}\,dx -\frac{t^{q^{\ast}/q}}{q^{\ast}} \int_\Omega |v|^{q^{\ast}}\,dx.
	\end{align}
	By the boundedness of $\mathcal{M}$ and the Sobolev inequality, using also Lemma \ref{Lemma H}\ref{Lemma H < I(w)}, we get 
	\begin{equation}\label{bound from M2}
		\frac{1}{p^{\ast}} \int_\Omega |u|^{p^{\ast}}\,dx\leq c_1,\quad\frac{1}{q^{\ast}} \int_\Omega |v|^{q^{\ast}}\,dx\leq c_1
		\quad\mbox{and}\quad\int_\Omega H(u,v)\,dx\leq c_1
	\end{equation}
	for some $c_1=c_1(N,p,q)>0$, for all $w=(u,v)\in\mathcal{M}$. 
	Hence,
	due to \eqref{E(wt) M th2} and \eqref{bound from M2}, since $\mu>0$, we see that
	\begin{equation*}
		E(w_t) \geq t  -\lambda c_1t^\gamma - c_1t^{q^{\ast}/q},\quad\forall w\in\mathcal{M}.
	\end{equation*}
	Since $1<\min\{\gamma;q^*/q\}$,   there exists $\rho> 0$ small enough, independently of $\mu>0$, such that
	\begin{equation}\label{min Mrho 2}
		\inf_{w \in \mathcal{M}_\rho} E(w)= \inf_{w \in \mathcal{M}} E(w_\rho)> 0.
	\end{equation}
	Now, let us estimate $E$ over $A$. Using \ref{H basic}, we have $\int_\Omega H(u,v)\,dx>0$ if $\int_\Omega \widetilde{H}(u,v)\,dx>0$.  Since $A_0\subset \mathcal{M}^+$ is compact, there is a positive constant $c_2$ such that
	\begin{equation}\label{H bounded in A0}
		\int_\Omega H(u,v)\,dx\geq c_2,\quad\forall w\in\ A_0.
	\end{equation}
	Using \eqref{bound from M2}, for $u$, and \eqref{H bounded in A0}, from Lemma \ref{Lemma H}\ref{Lemma H p star}  there exists $C>0$ such that
	\[
	c_2\leq\int_\Omega H(w)\,dx\leq \frac{c_2}{2}+C\left(\int_\Omega |v|^{q^*}\,dx\right)^{\tilde{q}/q^*}+C\left(\int_\Omega |v|^{q^*}\,dx\right)^{\bar{q}/q^*},\quad\forall w\in A_0.
	\]
	Then, there is $c_3=c_3(N,p,\tilde{p},\bar{p},q,\tilde{q},\bar{q},\Omega,c_2)\in(0,1)$ for which one, whenever $(u,v)\in A_0$, it holds
	\begin{equation}\label{v bdd below A0}
		\frac{1}{q^{\ast}} \int_\Omega |v|^{q^{\ast}}\,dx\geq 2c_3.
	\end{equation}
	Hence, using \eqref{E(wt) M th2}, \eqref{bound from M2} for $u$,  and \eqref{v bdd below A0}, since $1<p^\ast/p\leq q^\ast/q$ and $\lambda H\geq0$, we get
	\begin{align}\label{new eq 2}
		E(w_t) &\leq {t}  + \frac{\mu t^{p^{\ast}/p}}{p^{\ast}} \int_\Omega |u|^{p^{\ast}}\,dx 
		-\frac{ t^{q^{\ast}/q}}{q^{\ast}} \int_\Omega |v|^{q^{\ast}}\,dx
		\leq {t}  + \mu c_1  t^{p^{\ast}/p}-2c_3 t^{q^{\ast}/q}\nonumber\\
		&\leq (1 + \mu c_1){t}   -(2c_3 -\mu c_1)t^{q^{\ast}/q},
	\end{align}
	for all $w\in A_0$ and $t\geq0$.
	Considering $0<\mu<c_3/c_1$, we obtain
	\[
	E(w_t)\leq 2{t}  -c_3{ t^{q^{\ast}/q}}\leq 0,\quad\forall w\in A_0,\, \forall t\geq R,
	\]
	if $R>\rho$ is sufficiently large. In particular, we get 
	\begin{align}\label{sup in A th2}
		\sup_{w\in A}E(w)=\sup_{w\in A_0}E(w_R)\leq 0.
	\end{align}
	It remains to estimate $E$ on $X$. Using again \eqref{E(wt) M th2}, \eqref{bound from M2}, \eqref{H bounded in A0} and \eqref{v bdd below A0}, and arguing as in \eqref{new eq 2}, for $0<\mu<c_3/c_1$ we obtain
	\[
	E(w_t) \leq 2{t} -\lambda c_2t^\gamma -c_3t^{q^{\ast}/q}
	\leq 2t-\lambda c_2t^\gamma,
	\quad\forall w\in A_0,\,\forall t\geq0.
	\]
	Thus, we can choose $\Lambda^*_m>0$ sufficiently large such that
	\begin{align*}
		\max_{w\in A_0,\,t\geq0} E(w_t) &\leq \max_{t\geq0}\left(2t-\lambda c_2{ t^{\gamma}}\right)=\frac{2^{\gamma/(\gamma-1)}(\gamma-1)}{\gamma(\lambda\gamma c_2)^{1/(\gamma-1)}}<
		\frac{S_q^{N/q}}{2N},
	\end{align*}
	for all $\lambda\geq \Lambda^*_m$. 
	For each $\lambda\geq \Lambda^*_m$, we consider $d_{\lambda}$ as in Lemma \ref{Lemma PS} and fix $\mu_\lambda \in(0,c_3/c_1)$ such that  
	\[
	\sup_{w \in X} E(w)=\sup_{w \in A_0,\,t\in[0,R]} E(w_t) < \frac{S_q^{N/q}}{N}-\mu d_{\lambda}=c^\ast(\mu,\lambda)
	\]
	for all $\mu\in(0,\mu_\lambda)$. Thus, jointing this last estimate with \eqref{min Mrho 2} and \eqref{sup in A th2}, we conclude that Corollary \ref{Cor-2.6} can be applied. It ensures that, for each $\lambda\geq\Lambda^*_m$ and $\mu\in(0,\mu_\lambda)$, $E$ has $m$ distinct pairs of critical points $\pm w_j$, $j=1,\cdots,m$, satisfying
	\[
	0<E(w_j)\leq\sup_{w \in X} E(w).
	\]
	This concludes the proof of Theorem \ref{Th2 >1}.
	
	
	\subsection{Proof of Theorem \ref{Th3 <1}}
	
	
	The proof of this theorem is  based on Proposition \ref{critical point result}. 
	We will use a truncation of the functional $E$  inspired by  J. G. Azorero and I. P.  Alonso \cite{Azorero-Alonso-91}. We denote the functional associated with system \eqref{Sys} by
	\begin{equation*}
		E_{\lambda,\mu}(u,v)= I(u,v)-\lambda\int_\Omega H(u,v)\,dx+\frac{\mu}{p^{\ast}}\int_\Omega|u|^{p^{\ast}}\,dx-\frac{1}{q^{\ast}}\int_\Omega|v|^{q^{\ast}}\,dx
	\end{equation*}
	for
	\[
	I(u,v)=\frac{1}{p}\int_\Omega|\nabla u|^p\,dx+\frac{1}{q}\int_\Omega|\nabla v|^q\,dx.
	\]
	Since $H$ satisfies \ref{H basic}-\ref{H subcritic} and 
	\ref{H scaling} (with $\gamma<1$), from Lemma \ref{Lemma H}\ref{Lemma H < I(w)} we know that there exists $\tilde{c}_1>0$ such that
	\begin{equation*}
		\int_{\Omega } H(u,v)dx\leq \tilde{c}_1[I(u,v)]^\gamma,\quad\forall (u,v)\in W.
	\end{equation*}
	Also, from the Sobolev inequality, we get
	\[
	\frac{1}{q^*}\int_\Omega|v|^{q^{\ast}}\,dx \le
	\frac{1}{q^*}\left(\frac{1}{S_q}\int_\Omega|\nabla v|^q\,dx\right)^\frac{q^{\ast}}{q}
	\leq \tilde{c}_2[I(u,v)]^\frac{q^*}{q}.
	\]
	Thus, denoting $g_\lambda(t)=t- \tilde{c}_1\lambda t^\gamma-\tilde{c}_2t^\frac{q^*}{q}$, for $t\geq0$, since $\mu>0$ we have
	\begin{eqnarray*}
		E_{\lambda,\mu}(u,v)&\geq& I(u,v)- \tilde{c}_1\lambda[I(u,v)]^\gamma-\tilde{c}_2[I(u,v)]^\frac{q^*}{q}=g_\lambda(I(u,v)),\quad\forall (u,v)\in W.
	\end{eqnarray*}
	Recalling that $q^*/q>1>\gamma$, we see that is possible to find $\Lambda^*>0$ such that for each $\lambda\in(0,\Lambda^*)$ there exist $R_1(\lambda),R_2(\lambda)>0$ for which ones it holds
	\[
	g_\lambda(t)<0,\,\,\forall t\in[0,R_1(\lambda))\cup(R_2(\lambda),\infty)\qquad\mbox{and}\qquad g_\lambda(t)\geq0,\,\,\forall t\in[R_1(\lambda),R_2(\lambda)].
	\]
	Notice that if $\lambda\in(0,\Lambda^*)$ then $g_\lambda(t)\geq g_{\Lambda^*}(t)$ for all $t\in[R_1(\Lambda^*),R_2(\Lambda^*)]$. So, $R_1(\lambda)\leq R_1(\Lambda^*)$ and $R_2(\Lambda^*)\leq R_2(\lambda)$.
	Now we consider $\xi_\lambda:[0,\infty)\to[0,1]$ a smooth function satisfying 
	$$
	\xi_\lambda\equiv1\,\,\mbox{in}\,\, [0,R_1(\lambda)]\quad\mbox{and}\quad \xi_\lambda\equiv0\,\,\mbox{in}\,\,[R_2(\lambda),\infty).
	$$
	Finally, set the truncated functional $\tilde{E}_{\lambda,\mu}:W\to \mathbb{R}$ given by
	$$
	\tilde{E}_{\lambda,\mu}(w)=\xi_\lambda(I(w)){E}_{\lambda,\mu}(w).
	$$
	We observe that if $\tilde{E}_{\lambda,\mu}(w)<0$ then $I(w)\in(0,R_1(\lambda))$ and, due to the continuity of $I$, $I(\tilde{w})\in(0,R_1(\lambda))$ for $\tilde{w}$ in a neighborhood of $w$ in $W$. So
	\begin{equation}\label{truncation<0}
		\tilde{E}_{\lambda,\mu}(w)={E}_{\lambda,\mu}(w)<0\quad\mbox{and}\quad \tilde{E}_{\lambda,\mu}'(w)={E}_{\lambda,\mu}'(w).
	\end{equation}
	In particular, critical points for $\tilde{E}_{\lambda,\mu}$ at negative levels are also critical points for ${E}_{\lambda,\mu}$ at the same levels.
	Therefore, we will apply Proposition \ref{critical point result} to show that $\tilde{E}_{\lambda,\mu}$ has a sequence of critical points at negative levels. Our first step is to verify that  $\tilde{E}_{\lambda,\mu}$ satisfies the $(PS)_c$ condition for $c<0$, if $\lambda\in(0,\Lambda^*)$ with $\Lambda^*$ sufficiently small. Let $(u_n)$ be a sequence in $E$ such that 
	$$
	\tilde{E}_{\lambda,\mu}(w_n)\to c<0\quad\text{and}\quad \tilde{E}_{\lambda,\mu}'(w_n)\to 0.
	$$
	Since $\tilde{E}_{\lambda,\mu}(w_n)<0$ for large $n$, we have $I(w_n)\in(0,R_1(\Lambda^*))$, which means that $(w_n)$ is uniformly bounded for $\lambda\in(0,\Lambda^*)$. Up to a subsequence, $w_n\rightharpoonup w$ in $W$.  The uniform boundedness of  $I(w_n)\in(0,R_1(\Lambda^*))$   ensures that $I(u)\leq R_1(\Lambda^*)$, independently of $\lambda\in(0,\Lambda^*)$. 
	Moreover, \eqref{truncation<0} implies that $(w_n)$ is also a $(PS)_c$ sequence for ${E}_{\lambda,\mu}$. Hence, Lemma \ref{Lemma PS bounded} ensures that
	$w_n\to w$ in $W$ for some subsequence, if $\mu\in(0,\mu^*)$ and  $\lambda\in(0,\Lambda^*)$ for some $\mu^*>0$ and 
	$\Lambda^*>0$ small enough. Therefore,  $\tilde{E}_{\lambda,\mu}$ satisfies the $(PS)_c$ condition for all $c<0$ if $\lambda\in(0,\Lambda^*)$ and $\mu\in(0,\mu^*)$. Now, for fixed parameters $\lambda\in(0,\Lambda^*)$ and $\mu\in(0,\mu^*)$, our second step is to estimate the numbers $c_k$, given in equation \eqref{ck gamma<1}, which are 
	\begin{equation*}
		c_k=\inf_{M\in \mathcal{F}_k}\sup_{u\in M}\tilde{E}_{\lambda,\mu}(u)
		\quad\text{for}\quad
		\mathcal{F}_k = \{M \subset W \backslash\{0\}: M\,\,\text{is symmetric and}\,\, i(M) \geq k\}.
	\end{equation*}
	The boundedness of $\tilde{E}_{\lambda,\mu}$ from below guarantees that $c_k>-\infty$, for all $k\in\mathbb{N}$. We need to show that $c_k<0$ for large $k$. As before, we consider $\widetilde{H}(u,v)=|u|^\alpha|v|^\beta$ for $\alpha,\beta>1$ satisfying $\frac{\alpha}{p}+\frac{\beta}{q}=1$ and denote $(\lambda_k)$ the sequence of positive eigenvalues for problem \eqref{NE} given in Theorem \ref{lambdak}. Since $\lambda_k\nearrow\infty$, for arbitrarily large $m\in\mathbb{N}$ there is $k\geq m$ such that $\lambda_k<\lambda_{k+1}$. From Theorem \ref{lambdak}\ref{lambdak(iii)} we have
	\[
	i(\mathcal{M}^+\setminus\Psi_{\lambda_{k+1}})=k.
	\]
	We recall that $\mathcal{M}^+=\{w\in W:I(w)=1\,\text{and}\, \int_\Omega\widetilde{H}(w)\,dx>0\}$ and $\Psi(w)=1/\int_\Omega\widetilde{H}(w)\,dx$.
	Since $\mathcal{M}^+ \setminus \Psi_{\lambda_{k+1}}$ is an open symmetric subset of $\mathcal M$,  it has
	a compact symmetric subset $M$ of index $k$ (see the proof of Proposition 3.1 in
	Degiovanni and Lancelotti \cite{De-Lan-2007}).
	For $t > 0$, let $M_t = \{w_t : w \in M \}$. Since the mapping $M \to M_t$ , $w \mapsto w_t$ is an odd
	homeomorphism, it follows that
	\[i(M_t)= i(M)= k,\quad\forall t>0,\]
	which implies that $M\in \mathcal{F}_k $. We also have $I(w_t)=t<R_1(\lambda)$ for all $w\in\mathcal{M}$, if $t>0$ is small and so  $E_{\lambda,\mu}(w_t)= \tilde{E}_{\lambda,\mu}(w_t)$. Therefore, we are going to estimate  $E_{\lambda,\mu}(w_t)$ for $w$ in the compact $M$.
	The Sobolev inequality implies that
	\[
	\frac{1}{p^*}\int_{\Omega}|u|^{p^*}dx\leq c_1,\quad\forall (u,v)\in W.
	\]
	Since \ref{H basic} implies that $\int_\Omega H(u,v)\,dx>0$ in $\mathcal{M}^+$, due to the compactness of $M$, there exists $c_2>0$ such 
	\[
	\int_\Omega H(u,v)\,dx\geq c_2,\quad\forall (u,v)\in M.
	\]
	Hence, it follows from Lemma \ref{Lemma H}\ref{Lemma H p star} (see \eqref{v bdd below A0}) the existence of $c_3\in(0,1)$ such that
	\[
	\frac{1}{q^*}\int_{\Omega}|v|^{q^*}dx\geq 2c_3,\quad\forall (u,v)\in M.
	\]
	Notice that the constants $c_2$ and $c_3$ do not depend on $\mu$ nor $\lambda$. Recalling that   $p^*/p,q^*/q>1>\gamma$, for all $w\in M$ we obtain
	\begin{align*}
		E_{\lambda,\mu}(w_t)&=t-\lambda t^\gamma\int_\Omega\widetilde{H}(w)\,dx + \frac{\mu t^\frac{p^*}{p}}{p^*}\int_{\Omega}|u|^{p^*}dx
		-\frac{t^\frac{q^*}{q}}{q^*}\int_{\Omega}|v|^{q^*}dx\\
		&\leq (1+\mu c_1)t-\lambda c_2t^\gamma-(2c_3-\mu c_1)t^\frac{q^*}{q}
		\,\leq \, 2t-\lambda c_2t^\gamma<0
	\end{align*}
	if $\mu^*<c_3/c_1$ and $t>0$ is sufficiently small.  
	So, fixing a $t\in(0,R_1(\lambda))$ small we have $\sup_{w\in M}E_{\lambda,\mu}(w_{t})<0$.
	Thus, $\tilde{E}_{\lambda,\mu}(w)=E_{\lambda,\mu}(w)$ for all $w\in M_{t}$ and
	\begin{equation*}
		c_k=\inf_{M\in \mathcal{F}_k}\sup_{w\in M}\tilde{E}_{\lambda,\mu}(w)\leq \sup_{u\in M_{t}}\tilde{E}_{\lambda,\mu}(w)=\sup_{u\in M_{t}}{E}_{\lambda,\mu}(w)<0.
	\end{equation*}
	Thus, Proposition \ref{critical point result} can be applied to get a sequence $\{w_k^{\lambda,\mu}\}_k$ of critical points for $\tilde{E}_{\lambda,\mu}$ at negative energy levels. By definition of $\tilde{E}_{\lambda,\mu}$ we see that
	\(
	\tilde{E}_{\lambda,\mu}(w_k^{\lambda,\mu})<0
	\)
	implies that $I(w_k^{\lambda,\mu})<R_1(\lambda).$ Moreover, from the definition of $R_1(\lambda)$, since
	\[
	g_\lambda(\lambda)=\lambda- \tilde{c}_1\lambda^{\gamma+1}-\tilde{c}_2\lambda^\frac{q^*}{q}>0
	\]
	for small $\lambda>0$, we have $R_1(\lambda)<\lambda$. In particular, $R_1(\lambda)\to0^+$ as $\lambda\to0^+$.
	This ensures that 
	\[
	I(w_k^{\lambda,\mu})=\frac{1}{p}\int_\Omega|\nabla u_k^{\lambda,\mu}|^p\,dx+\frac{1}{q}\int_\Omega|\nabla v_k^{\lambda,\mu}|^q\,dx\to0\quad\text{as}\quad\lambda\to0
	\]
	and concludes this proof.
	
	\section*{Acknowledgements}
	
	Elisandra Gloss would like to thank Conselho Nacional de Desenvolvimento Científico e Tecnol\'ogico (CNPq) for the support. In particular, for the grants 201454/2024-6 and 443594/2023-6.

	\section*{Conflict of interest statement}
	
	The authors have no relevant financial or non-financial interests to disclose.

\end{document}